\newtheorem{theorem}{Theorem}
\newtheorem{proposition}[theorem]{Proposition}
\newtheorem{lemma}[theorem]{Lemma}
\theoremstyle{definition}
\newtheorem{remark}[theorem]{Remark}
\newtheorem{example}[theorem]{Example}
\numberwithin{equation}{section}
\newcommand{\ddiv}{\operatorname{div}}
\newcommand{\uHt}{\tilde{u}_H}
\newcommand{\smooth}{^{\mathit{reg}}}
\newcommand{\loc}{^{(\ell)}}
\newcommand{\sym}{\operatorname{sym}}
\newcommand{\card}{\operatorname{card}}
\newcommand{\T}{\mathcal{T}}
\newcommand{\Cor}{\mathcal{C}}
\newcommand{\nei}{\mathsf{N}}
\newcommand{\wcba}{\bm{\mathrm{wcba}}}
\begin{document}
\pagestyle{fancy}
\fancyhead{}
\setlength{\headheight}{14pt}
\renewcommand{\headrulewidth}{0pt}

\fancyhead[c]{\small \it Numerical homogenization}
\title{Computation of quasilocal effective
       diffusion tensors and connections to the mathematical theory of homogenization}

\author{%
       D.~Gallistl\thanks{Institut f\"ur Angewandte und Numerische Mathematik,
         Karlsruher Institut f\"ur Technologie,
         76049 Karlsruhe, Germany
         }\;\;\footnotemark[3]
        \and 
       D.~Peterseim\thanks{Institut f\"ur Mathematik,
         Universit\"at Augsburg,        
         86135 Augsburg, Germany
         }\;\;\thanks{D.~Gallistl is supported by Deutsche Forschungsgemeinschaft (DFG) through CRC 1173.
                       D.~Peterseim acknowledges support by DFG in the Priority Program 1748 ``Reliable simulation techniques in solid mechanics'' (PE2143/2-1).
                       Main parts of this paper were written at the Institut f\"ur Numerische
                       Simulation (Bonn). The authors also thank
                       the Hausdorff Institute for Mathematics in Bonn
                       for the kind hospitality of during the trimester program on multiscale problems. }
       }
\date{}

\maketitle

\begin{abstract}
This paper aims at bridging existing theories in
numerical and analytical homogenization.
For this purpose
the multiscale method
of M{\aa}lqvist and Peterseim [Math.\ Comp.\ 2014],
which is based on orthogonal subspace decomposition,
is reinterpreted by means
of a discrete integral operator acting on standard finite element
spaces.
The exponential decay of the involved integral kernel
motivates the use of a diagonal approximation and, hence, 
a localized piecewise constant coefficient.
In a periodic setting, the computed localized coefficient is proved to
coincide with the classical homogenization limit.
An a~priori error analysis shows that the local numerical model is
appropriate beyond the periodic setting when the localized coefficient 
satisfies a certain homogenization criterion, which can be verified 
a~posteriori. The results are illustrated in numerical experiments.
\end{abstract}

{\small
\noindent
\textbf{Keywords}
numerical homogenization, multiscale method, upscaling,
a~priori error estimates, a~posteriori error estimates

\noindent
\textbf{AMS subject classification}
65N12,  
65N15,  
65N30,  
73B27,  
74Q05  
}

\section{Introduction}

Consider the prototypical elliptic model problem
\begin{equation*}
-\ddiv A_\varepsilon \nabla u = f
\end{equation*}
where the diffusion coefficient $A_\varepsilon$ encodes
microscopic features on some characteristic length scale $\varepsilon$.
Homogenization is a tool of mathematical modeling to identify 
reduced descriptions of the macroscopic response of such
multiscale models in the limit as $\varepsilon$ tends to zero. 
It turns out that suitable limits represented by the so-called 
effective or homogenized coefficient exist in fairly general 
settings in the framework of $G$-, $H$-, or two-scale convergence
\cite{Spagnolo:1968,Gio75,Mur78,MR990867,MR1185639}.
In general, the effective coefficient is not explicitly given 
but is rather the result of an implicit representation based on 
cell problems.
This representation
usually requires structural assumptions on the 
sequence of coefficients $A_\varepsilon$ such as local periodicity 
and scale separation \cite{BLP78}. Under such assumptions, 
efficient numerical methods for the approximate evaluation of 
the homogenized model are available, e.g., the Heterogeneous Multiscale Method (HMM) 
\cite{EE03,AbdulleEEngquistVandenEijnden2012} or the two-scale finite element method \cite{MaSch02}.

In contrast to this idealized setting of analytical homogenization,
in practice one
is often concerned with one coefficient $A$ with heterogeneities 
on multiple nonseperable scales and a corresponding sequence of 
scalable models can hardly be identified or may not be available at all.
That is why we are interested in the computation of effective 
representations of very rough coefficients beyond structural 
assumptions such as scale separation and local periodicity. 
In recent years, many  numerical attempts have been developed 
that conceptually do not rely on analytical homogenization results
for rough cases. Amongst them are the multiscale finite element 
method \cite{MR1455261,EfendievHou2009}, metric-based upscaling
\cite{MR2292954}, hierarchical matrix compression 
\cite{GreffHackbusch2008,Hackbusch2015}, 
the flux-norm approach \cite{BO10}, 
generalized finite elements based on spectral cell problems
\cite{BabuskaLipton2011,Efendiev2013116}, 
the AL basis \cite{GrasedyckGreffSauter2012,WeymuthSauter2015},
rough polyharmonic splines \cite{OwhadiZhangBerlyand2013},
iterative numerical homogenization \cite{KornhuberYserentant2015}, and 
gamblets \cite{2015arXiv150303467O}. 
 
Another construction based on concepts of orthogonal subspace 
decomposition and the 
solution of localized microscopic cell problems was given
in \cite{MalqvistPeterseim2014} 
and later optimized in
\cite{HenningPeterseim2013,HenningMorgensternPeterseim2015,GallistlPeterseim2015,Peterseim:2015}.
The method is referred to as the Localized Orthogonal Decomposition
(LOD) method.
The approach is inspired by ideas of the 
variational multiscale method
\cite{MR1660141,HughesSangalli2007,Malqvist:2011}.
As most of the methods above, the LOD
constructs a basis representation of some 
finite-dimensional operator-dependent subspace with superior 
approximation properties
rather than computing an upscaled coefficient. The effective 
model is then a discrete one represented by the corresponding 
stiffness matrix and possibly right-hand side. 
The computation of an effective coefficient is, however,
often favorable and this paper re-interprets and modifies the
LOD method in this regard. 

To this end, we revisit the LOD method of
\cite{MalqvistPeterseim2014}.
The original method employs finite element basis functions that
are modified by a fine-scale correction with a slightly larger
support.
We show that it is possible to rewrite the method 
by means of a discrete integral operator  acting on standard
finite element spaces.
The discrete operator is of non-local nature
and is not necessarily associated with a
differential operator on the energy space
$H^1_0(\Omega)$ (for the physical domain $\Omega\subseteq\mathbb R^d$).
The observation scale $H$ is associated with some quasi-uniform
mesh $\T_H$ of width $H$.
We are able to show that there is a discrete effective
non-local model represented by an integral kernel
$\mathcal A_H\in L^\infty(\Omega\times\Omega;\mathbb R^{d\times d})$
such that the problem is well-posed on a finite-element space
$V_H$ with similar constants
and satisfies
\begin{equation*}
\sup_{f\in L^2(\Omega)\setminus\{0\}}
\frac{\|u(f)-u_H(f)\|_{L^2(\Omega)}}{\|f\|_{L^2(\Omega)}}
\lesssim
\sup_{f\in L^2(\Omega)\setminus\{0\}} \inf_{v_H\in V_H}
 \frac{\| u(f)-v_H\|_{L^2(\Omega)}}{\|f\|_{L^2(\Omega)}} + H^2.
\end{equation*}
Based on the exponential decay of that kernel
$\mathcal A_H$ away from the 
diagonal, we propose a quasi-local and sparse formulation 
as an approximation. The storage requirement for the quasi-local kernel 
is $\mathcal O(H^{-d} \lvert\log H\rvert)$.

For an even stronger compression to $\mathcal O(H^{-d})$ information, 
one can replace $\mathcal A_H$
by a local and piecewise constant tensor field $A_H$.
It turns out that this localized effective coefficient $A_H$ coincides with 
the homogenized coefficient of classical homogenization in the periodic case
provided that the structure of the coefficient is slightly
stronger than only periodic and that the mesh is suitably aligned with the periodicity pattern. 
In this sense, the results of this paper bridge the multiscale method 
of \cite{MalqvistPeterseim2014} with classical analytical techniques and numerical methods such as HMM.
With regard to the recent reinterpretation of the multiscale 
method in \cite{Kornhuber.Peterseim.Yserentant:2016}, the paper even 
connects all the way from analytic homogenization to the theory of 
iterative solvers.

This new representation of the multiscale method turns out to be
particularly attractive for computational stochastic homogenization
\cite{GallistlPeterseim2017}.
A further advantage of our numerical techniques when compared with classical analytical techniques is that they are still applicable in the general non-periodic case, where the local numerical model yields
reasonable results whenever a certain quantitative homogenization criterion
is satisfied, which can be checked a~posteriori through
a computable model error estimator. Almost optimal convergence rates
can be proved under reasonable assumptions on the data. 

\medskip
The structure of this article is as follows.
After the preliminaries on the model problem and notation 
from Section~\ref{s:notation},
we review the LOD method of \cite{MalqvistPeterseim2014}
and introduce the quasi-local effective discrete coefficients
in Section~\ref{s:nonloc}.
In Section~\ref{s:local}, we present the error analysis for
the localized effective coefficient.
Section~\ref{s:periodic} studies the particular case of a
periodic coefficient.
We present numerical results in Section~\ref{s:num}.

\bigskip
Standard notation on Lebesgue and Sobolev spaces
applies throughout this paper. 
The notation $a\lesssim b$ abbreviates $a\leq C b$ for some
constant $C$ that is independent of the mesh-size,
but may depend on the contrast of the coefficient $A$;
$a\approx b$ abbreviates $a\lesssim b\lesssim a$.
The symmetric part of a quadratic matrix $M$ is
denoted by $\sym(M)$.

\section{Model problem and notation}\label{s:notation}

This section describes the model problem and some notation
on finite element spaces.
\subsection{Model problem}
Let $\Omega\subseteq\mathbb R^d$ for $d\in\{1,2,3\}$ be
an open Lipschitz polytope.
We consider the prototypical model problem
\begin{equation}\label{e:modelstrong}
-\ddiv (A \nabla u ) = f
\quad\text{in }\Omega,
\qquad
u|_{\partial\Omega}=0.
\end{equation}
The coefficient $A\in L^\infty(\Omega;\mathbb R^{d\times d})$ 
is assumed to be symmetric
and to satisfy the following uniform
spectral bounds
\begin{equation}\label{e:Abounds}
0<\alpha
\leq \operatorname*{ess\,inf}_{x\in\Omega}
     \inf_{\xi\in\mathbb R^d\setminus\{0\}}
    \frac{\xi \cdot ( A(x)\xi)}{\xi\cdot\xi}
\leq
\operatorname*{ess\,sup}_{x\in\Omega}
\sup_{\xi\in\mathbb R^d\setminus\{0\}}
\frac{\xi\cdot ( A(x)\xi)}{\xi\cdot\xi}
\leq \beta.
\end{equation}
The symmetry of $A$ is not essential for our analysis
and is assumed for simpler notation.
The weak form employs the Sobolev space
$V:=H^1_0(\Omega)$
and the bilinear form $a$ defined,
for any $v,w\in V$, by
\begin{equation*}
a(v,w) := (A\nabla v,\nabla w)_{L^2(\Omega)}.
\end{equation*}
Given $f\in L^2(\Omega)$ and the linear functional
$$
 F:V\to\mathbb R,
 \quad\text{with }
 F(v):= \int_\Omega fv\,dx
 \quad\text{ for any } v\in V,
$$
the weak form seeks $u\in V$ such that
\begin{equation}\label{e:modelweak}
a(u,v) = F(v) \quad\text{for all } v\in V.
\end{equation}

\subsection{Finite element spaces}
Let $\T_H$ be a quasi-uniform regular triangulation of $\Omega$
and let $V_H$ denote the standard $P_1$ finite element space,
that is, the subspace of $V$ consisting of piecewise first-order
polynomials.

Given any subdomain $S\subseteq\overline\Omega$, define its neighbourhood
via
\begin{equation*}
\nei(S):=\operatorname{int}
          \Big(\cup\{T\in\T_H\,:\,T\cap\overline S\neq\emptyset  \}\Big).
\end{equation*}
Furthermore, we introduce for any $m\geq 2$ the patch extensions
\begin{equation*}
\nei^1(S):=\nei(S)
\qquad\text{and}\qquad
\nei^m(S):=\nei(\nei^{m-1}(S)) .
\end{equation*}
Throughout this paper, we assume that the coarse-scale mesh $\T_H$
belongs to a family of quasi-uniform triangulations.
The global mesh-size reads 
$H:=\max\{\operatorname{diam}(T):T\in\T_H\}$.
Note that the shape-regularity implies that there is a uniform bound 
$C(m)$
on the number of elements in the $m$th-order patch,
$
\card\{ K\in\T_H\,:\, K\subseteq \overline{\nei^m(T)}\}
\leq C(m)
$
for all ${T\in\T_H}$.
The constant $C(m)$ depends polynomially on $m$.
The set of interior $(d-1)$-dimensional hyper-faces 
of $\T_H$ is denoted by
$\mathcal F_H$. For a piecewise continuous function $\varphi$,
we denote the jump across an interior edge by $[\varphi]_F$,
where the index $F$ will be sometimes omitted for brevity.
The space of piecewise constant $d\times d$ matrix fields
is denoted by
$P_0(\T_H;\mathbb R^{d\times d})$.

Let $I_H:V\to V_H$ be a 
surjective
quasi-interpolation operator that
acts as a $H^1$-stable and $L^2$-stable
quasi-local projection in the sense that
$I_H\circ I_H = I_H$ and that
for any $T\in\T_H$ and all $v\in V$ there holds
\begin{align}
\label{e:IHapproxstab}
H^{-1}\|v-I_H v\|_{L^2(T)} + \|\nabla I_H v \|_{L^2(T)}
&
\leq C_{I_H} \|\nabla v\|_{L^2(\nei(T))} 
\\
\label{e:IHstabL2}
\|I_H v\|_{L^2(T)}
&
\leq C_{I_H} \|v\|_{L^2(\nei(T))} .
\end{align}
Since $I_H$ is a stable projection from $V$ to $V_H$,
any $v\in V$ is quasi-optimally approximated by $I_H v$
in the $L^2(\Omega)$ norm as well as in the $H^1(\Omega)$ norm.
One possible choice
is to define $I_H:=E_H\circ\Pi_H$, where
$\Pi_H$ is the $L^2$ projection onto 
the space $P_1(\T_H)$ of piecewise affine (possibly discontinuous)
functions
and $E_H$ is the averaging operator that maps $P_1(\T_H)$ to $V_H$ by
assigning to each free vertex the arithmetic mean of the corresponding
function values of the neighbouring cells, that is, for any $v\in P_1(\T_H)$
and any free vertex $z$ of $\T_H$,
\begin{equation}\label{e:EHdef}
(E_H(v))(z) =
           \sum_{\substack{T\in\T_H\\\text{with }z\in T}}v|_T (z) 
           \bigg/
           \card\{K\in\T_H\,:\,z\in K\}.
\end{equation}
This choice of $I_H$ is employed in our numerical experiments.

\section{Non-local effective coefficient}\label{s:nonloc}

We introduce a modified version of 
the LOD method of 
\cite{MalqvistPeterseim2014,HenningPeterseim2013}
and its localization.
We give a new interpretation by means of a non-local effective
coefficient and present an a~priori error estimate.

\subsection{A modified LOD method}

Let $W:=\operatorname{ker}I_H\subseteq V$ denote the kernel of $I_H$.
Given any $T\in\T_H$ and $j\in\{1,\dots,d\}$,
the element corrector $q_{T,j}\in W$ is the solution of
the variational problem
\begin{equation}\label{e:qTelementcorr}
 a( w, q_{T,j} )
 =  \int_T \nabla w\cdot( A e_j)\, dx 
 \quad \text{for all }w\in W.
\end{equation}
Here $e_j$ is the $j$-th standard Cartesian
unit vector in $\mathbb{R}^d$. 
The gradient of any $v_H\in V_H$ has the representation
\begin{equation*}
\nabla v_H = \sum_{T\in\T_H} \sum_{j=1}^d (\partial_j v_H|_T) e_j.
\end{equation*}
Given any $v_H\in V_H$, define the corrector $\Cor v_H$ by
\begin{equation}\label{e:CorvHexpansion}
\Cor v_H = \sum_{T\in\T_H} \sum_{j=1}^d (\partial_j v_H|_T) q_{T,j} .
\end{equation}
We remark that for any $v_H\in V_H$ the gradient
$\nabla v_H$ is piecewise constant and, thus,
$\Cor v_H$ is a finite linear combination of the 
element correctors $q_{T,j}$.
It is readily verified that, for any $v_H\in V_H$,
$\Cor v_H$ is the $a$-orthogonal projection on $W$, i.e.,
\begin{equation}\label{e:cororth}
a(w,v_H-\Cor v_H)=0\quad\text{for all }w\in W.
\end{equation}
Clearly, by \eqref{e:cororth}, the projection $\Cor v\in W$ is
well-defined for any $v\in V$. The representation
\eqref{e:CorvHexpansion} for discrete functions will, however,
be useful in this work.

The LOD method in its version from \cite{MalqvistPeterseim2014} seeks
$\bar u_H\in V_H$ such that
\begin{equation}\label{e:msprimal}
a((1-\Cor)\bar u_H,(1-\Cor)v_H) = 
F((1-\Cor)v_H)
\quad\text{for all }v_H\in V_H.
\end{equation}
By \eqref{e:cororth}, it is clear that this is equivalent to
\begin{equation}\label{e:msprimalPG}
a(\bar u_H,(1-\Cor)v_H) = 
F((1-\Cor)v_H)
\quad\text{for all }v_H\in V_H.
\end{equation}
A variant of this multiscale method
employs a problem-independent right-hand side and seeks
$u_H\in V_H$ such that
\begin{equation*}
a((1-\Cor)u_H,(1-\Cor)v_H)= 
F(v_H)
\quad\text{for all }v_H\in V_H
\end{equation*}
or, equivalently,
\begin{equation}\label{e:mspg}
a( u_H, (1-\Cor) v_H)
= F(v_H)
\quad\text{for all } v_H\in V_H.
\end{equation}

\subsection{Localization of the corrector problems}

Here, we briefly describe the localization technique of
\cite{MalqvistPeterseim2014}.
It was shown in \cite{MalqvistPeterseim2014} and
\cite[Lemma~4.9]{HenningPeterseim2013} that the method
is localizable in the sense that any $T\in\T_H$ and
any $j\in\{1,\dots,d\}$ satisfy
\begin{equation}\label{e:expdecay}
\|\nabla q_{T,j}\|_{L^2(\Omega\setminus\nei^m(T))}
\lesssim
\exp(-cm)
\|e_j\|_{L^2(T)} , \qquad m\geq 0.
\end{equation}
The exponential decay from \eqref{e:expdecay} suggests to
localize the computation \eqref{e:qTelementcorr} of the corrector
belonging to an element $T\in\T_H$ to a smaller domain, namely the extended element patch
$\Omega_T:=\nei^\ell(T)$ of order $\ell$.
The nonnegative integer $\ell$ is referred to as the
\emph{oversampling parameter}.
Let $W_{\Omega_T}\subseteq W$ denote the space of functions from $W$
that vanish outside $\Omega_T$.
On the patch,
in analogy to \eqref{e:qTelementcorr}, for any $v_H\in V_H$,
any $T\in\T_H$ and any $j\in\{1,\dots,d\}$,
the function $q_{T,j}\loc\in W_{\Omega_T}$ solves
\begin{equation}\label{e:qTelementcorrELL}
\int_{\Omega_T} \nabla w\cdot(A \nabla q_{T,j}\loc)\,dx
=
\int_{T} \nabla w \cdot(A e_j)\,dx
\quad\text{for all }w\in W_{\Omega_T}.
\end{equation}
Given $v_H\in V_H$, we define the corrector $\Cor\loc v_H\in W$
by
\begin{equation}\label{e:corexpansionELL}
\Cor\loc  v_H
= \sum_{T\in\T_H} \sum_{j=1}^d (\partial_j v_H|_T)  q_{T,j}\loc.
\end{equation}
A practical variant of \eqref{e:mspg}
is to seek $u_H\loc \in V_H$ such that
\begin{equation}\label{e:mspgELL}
a(u_H\loc, (1-\Cor\loc)v_H) = 
F(v_H)
\quad\text{for all }v_H\in V_H.
\end{equation}
This procedure is indispensable for actual computations
and the effect of the truncation of the domain on the error of
the multiscale method was analyzed in 
\cite{MalqvistPeterseim2014} and
\cite{HenningPeterseim2013}.
We will provide the error analysis for the method 
\eqref{e:mspgELL} in Subsection~\ref{ss:analysisquasilocal} below.

\subsection{Definition of the quasi-local effective coefficient}

In this subsection, we do not make any specific choice
for the oversampling parameter $\ell$.
In particular, the analysis covers the case that all element
patches $\Omega_T$ equal the whole domain $\Omega$.
We denote the latter case formally by $\ell=\infty$.

We re-interpret the left-hand side of \eqref{e:mspgELL} as a
non-local operator acting on standard finite element functions.
To this end, consider any $u_H,v_H \in V_H$.
We have
\begin{equation*}
a(u_H, (1-\Cor\loc)v_H) =
\int_\Omega \nabla u_H\cdot (A\nabla v_H)\,dx
-
\int_\Omega \nabla u_H\cdot (A \Cor\loc \nabla v_H)\,dx.
\end{equation*}
The second term can be expanded with 
\eqref{e:corexpansionELL} 
as
\begin{equation*}
\begin{aligned}
&
\int_\Omega \nabla u_H\cdot (A\nabla \Cor\loc v_H)\,dx
\\
&\qquad
=
 \sum_{T\in\T_H} \sum_{k=1}^d
  (\partial_k v_H|_T) 
 \int_\Omega \nabla u_H\cdot (A \nabla q_{T,k}\loc)\,dx
\\
 &\qquad
=
\sum_{K,T\in\T_H} 
\int_K \nabla u_H\cdot 
\left(\sum_{k=1}^d \fint_K(A(y) \nabla q_{T,k}\loc(y))\,dy  \;(\partial_k v_H|_T) \right)\,dx
\\
&\qquad
=
\sum_{K,T\in\T_H} \lvert K \rvert \,\lvert T\rvert \;\nabla u_H|_K 
  \cdot (\mathcal{K}_{T,K}\nabla v_H|_T)
\end{aligned}
\end{equation*}
for the matrix $\mathcal{K}_{T,K}\loc$ defined for any $K,T\in\T_H$ by
$$
(\mathcal{K}_{T,K}\loc)_{j,k} 
 := \frac{1}{|T|\,|K|} e_j\cdot\int_K A\nabla q_{T,k}\loc\,dx .
$$
Define the piecewise constant matrix field over $\T_H\times\T_H$,
for $T,K\in \T_H$ by
$$
\mathcal A_H\loc|_{T,K} 
:= \frac{\delta_{T,K}}{|K|} \fint_T A\,dx - \mathcal{K}_{T,K}\loc
$$
(where $\delta$ is the Kronecker symbol)
and the bilinear form $\mathfrak{a}\loc$ on $V_H\times V_H$ by
$$
\mathfrak{a}\loc(v_H,z_H):=
\int_\Omega\int_\Omega 
   \nabla v_H(y) \cdot ( \mathcal A_H\loc(x,y)\nabla z_H(x))\,dy\,dx 
   \quad\text{for any } v_H,z_H\in V_H.
$$
We obtain for all $v_H,z_H\in V_H$ that
\begin{equation}\label{e:frakaeq}
a(v_H, (1-\Cor\loc) z_H)
=
\mathfrak{a}\loc(v_H,z_H).
\end{equation}

\begin{remark}[notation]
For simplices $T,K\in\T_H$
with $x\in T$ and $y\in K$,
we will sometimes write
$\mathcal{K}\loc(x,y)$ instead of
$\mathcal{K}_{T,K}\loc$ (with analogous notation for 
$\mathcal{A}\loc$).
\end{remark}

Next, we state the equivalence of two 
multiscale formulations.

\begin{proposition}
A function $u_H\loc\in V_H$ solves \eqref{e:mspgELL} if and only
if it solves
\begin{equation}\label{e:msnonlocal}
\mathfrak{a}\loc(u_H\loc,v_H) = F(v_H) .
\end{equation}
\end{proposition}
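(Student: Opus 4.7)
The plan is to treat the identity \eqref{e:frakaeq} as a black box and observe that the proposition follows from a direct substitution. Indeed, \eqref{e:frakaeq} is precisely the statement that the non-local bilinear form $\mathfrak{a}\loc$ encodes the action of $a(\cdot, (1-\Cor\loc)\cdot)$ on $V_H \times V_H$, so the two formulations differ only in notation.

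First, I would specialize \eqref{e:frakaeq} by substituting its first argument by $u_H\loc$ and its second argument by an arbitrary $v_H\in V_H$. This yields
\[
a(u_H\loc, (1-\Cor\loc) v_H) = \mathfrak{a}\loc(u_H\loc, v_H)
\quad\text{for every } v_H \in V_H.
\]
Next, I would read off that the left-hand side is exactly the left-hand side of \eqref{e:mspgELL}, while the right-hand side is exactly the left-hand side of \eqref{e:msnonlocal}. Since the linear functional $F(v_H)$ appears identically on the right-hand sides of both formulations and both equations are universally quantified over $v_H \in V_H$, their solution sets coincide, which gives the desired \emph{if and only if} statement.

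There is no genuine obstacle here: the computational work has already been absorbed into the derivation of \eqref{e:frakaeq}, namely the expansion \eqref{e:corexpansionELL} of $\Cor\loc v_H$ into the element correctors $q_{T,j}\loc$ and the bookkeeping that produced the kernel $\mathcal A_H\loc$. The only point worth flagging is that \eqref{e:frakaeq} was established for arbitrary pairs in $V_H\times V_H$ without invoking symmetry of $A$ or of $\mathfrak{a}\loc$, so the equivalence of \eqref{e:mspgELL} and \eqref{e:msnonlocal} requires no additional structural hypothesis on the data.
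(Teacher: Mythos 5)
Your proposal is correct and follows exactly the paper's argument: the proof of the proposition is the direct substitution of \eqref{e:frakaeq}, which identifies $a(u_H\loc,(1-\Cor\loc)v_H)$ with $\mathfrak{a}\loc(u_H\loc,v_H)$ for all $v_H\in V_H$, so the two variational problems have identical solution sets. Nothing further is needed.
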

\begin{proof}
This follows directly from the representation \eqref{e:frakaeq}.
\end{proof}

\begin{remark}
For $d=1$ and $I_H$ the standard nodal interpolation operator,
the corrector problems localize to one element and the presented
multiscale approach coincides with various known methods
(homogenization, MSFEM). The resulting effective coefficient
$\mathcal A_H\loc$ is diagonal and, thus, local.
This is no longer the case for $d\geq 2$.
\end{remark}

\subsection{Error analysis}\label{ss:analysisquasilocal}

This subsection presents an error estimate for the error produced
by the method \eqref{e:mspgELL} (and so by the method \eqref{e:msnonlocal}).
We begin by briefly summarizing
some results from \cite{MalqvistPeterseim2014}.
\begin{lemma}\label{l:mp14lemma}
Let $u\in V$ solve \eqref{e:modelweak} and
$\bar u_H\in V_H$ solve \eqref{e:msprimal}. Then we have 
the following properties.
\begin{itemize}
\item[(i)] $\bar u_H$ coincides with the quasi-interpolation of $u$,
           i.e., $\bar u_H = I_H u$.
\item[(ii)] The Galerkin orthogonality
              $a(u-(1-\Cor)I_H u, (1-\Cor) v_H) = 0$
              for all $v_H\in V_H$ is satisfied.
\item[(iii)] The error satisfies
              $\|\nabla (u-(1-\Cor)\bar u_H)\|_{L^2(\Omega)}
              \lesssim H \|f\|_{L^2(\Omega)}$.
\end{itemize}
\end{lemma}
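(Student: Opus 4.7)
The plan is to exploit the $a$-orthogonal decomposition $V=(1-\Cor)V_H\oplus W$ induced by the definition of $\Cor$ as the $a$-orthogonal projection onto $W=\ker I_H$. Indeed, every $v\in V$ decomposes as $v=(v-\Cor v)+\Cor v$, the two summands are $a$-orthogonal by \eqref{e:cororth}, and $(1-\Cor)V_H\cap W=\{0\}$ because $I_H\bigl((1-\Cor)v_H\bigr)=I_H v_H-I_H\Cor v_H=v_H$ (using that $I_H$ is a projection fixing $V_H$ and that $\Cor v_H\in W$). In particular, applying $I_H$ to the identity $u=(1-\Cor)\tilde u_H+w$ with $\tilde u_H\in V_H$, $w\in W$ yields $I_H u=\tilde u_H$.

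For part~(i), I would test the continuous weak form \eqref{e:modelweak} with $(1-\Cor)v_H$ for arbitrary $v_H\in V_H$, obtaining $a(u,(1-\Cor)v_H)=F((1-\Cor)v_H)$. Substituting the decomposition $u=(1-\Cor)I_H u+w$ with $w\in W$ and using the $a$-orthogonality $a(w,(1-\Cor)v_H)=0$ from \eqref{e:cororth}, I get $a((1-\Cor)I_H u,(1-\Cor)v_H)=F((1-\Cor)v_H)$. By the unique solvability of \eqref{e:msprimal} (which follows from the coercivity of $a$ restricted to $(1-\Cor)V_H$, a consequence of \eqref{e:Abounds} together with injectivity of $1-\Cor$ on $V_H$), this forces $\bar u_H=I_H u$. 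Part~(ii) is then the same identity rewritten: $a(u-(1-\Cor)I_H u,(1-\Cor)v_H)=a(w,(1-\Cor)v_H)=0$ for all $v_H\in V_H$ by the defining orthogonality \eqref{e:cororth}.

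For part~(iii), using~(i) the error equals $u-(1-\Cor)\bar u_H=u-(1-\Cor)I_H u=w\in W$. Testing the continuous problem with $w\in W\subset V$ gives $a(u,w)=(f,w)_{L^2(\Omega)}$; on the other hand $a((1-\Cor)I_H u,w)=0$ by the $a$-orthogonality, so $a(w,w)=(f,w)_{L^2(\Omega)}$. The crucial ingredient now is the Poincaré-type estimate $\|w\|_{L^2(\Omega)}\lesssim H\|\nabla w\|_{L^2(\Omega)}$ valid for every $w\in W$, which I would obtain by applying \eqref{e:IHapproxstab} elementwise (since $I_H w=0$) and summing, exploiting the finite overlap of the patches $\nei(T)$ guaranteed by shape-regularity. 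Combining with the lower bound from \eqref{e:Abounds} yields $\alpha\|\nabla w\|_{L^2(\Omega)}^2\leq\|f\|_{L^2(\Omega)}\|w\|_{L^2(\Omega)}\lesssim H\|f\|_{L^2(\Omega)}\|\nabla w\|_{L^2(\Omega)}$, from which~(iii) follows.

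The one step that requires the most care is the well-posedness argument behind~(i), i.e., that $a((1-\Cor)\cdot,(1-\Cor)\cdot)$ is coercive on $V_H$. This reduces to showing $\|\nabla(1-\Cor)v_H\|_{L^2(\Omega)}\gtrsim\|\nabla v_H\|_{L^2(\Omega)}$, which I would derive by applying $I_H$ to $(1-\Cor)v_H$ (recovering $v_H$) and then invoking the $H^1$-stability bound in \eqref{e:IHapproxstab}. Everything else is essentially bookkeeping on top of the orthogonal decomposition and the approximation/stability properties of $I_H$.
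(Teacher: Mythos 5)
Your proof is correct, and it is essentially the argument the paper relies on: the paper itself only cites \cite{MalqvistPeterseim2014} for this lemma, and your reasoning reproduces the standard proof from that reference, namely the $a$-orthogonal splitting $V=(1-\Cor)V_H\oplus W$ with $I_H\bigl((1-\Cor)v_H\bigr)=v_H$, which gives (i) and (ii), together with the Poincar\'e-type bound $\|w\|_{L^2(\Omega)}\lesssim H\|\nabla w\|_{L^2(\Omega)}$ on $W=\ker I_H$ from \eqref{e:IHapproxstab}, which gives (iii). The only step worth stating explicitly is that $u-(1-\Cor)I_Hu\in W$ (equivalently $(1-\Cor)u=(1-\Cor)I_Hu$, since $u-I_Hu\in W$ and $\Cor$ fixes $W$), but this follows immediately from the projection property of $I_H$ that you already use.
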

\begin{proof}
See \cite{MalqvistPeterseim2014} for proofs.
\end{proof}

We define the following worst-case best-approximation
error
\begin{equation}\label{e:wcbadef}
\wcba(A,\T_H) 
:=
\sup_{g\in L^2(\Omega)\setminus\{0\}}
\inf_{v_H\in V_H} \frac{\|u(g)- v_H\|_{L^2(\Omega)}}{\|g\|_{L^2(\Omega)}} 
\end{equation}
where for $g\in L^2(\Omega)$, $u(g)\in V$ solves 
\eqref{e:modelweak} with right-hand side $g$.
Standard interpolation and stability estimates show that always
$\wcba(A,\T_H)\lesssim H$, but it may behave better in certain regimes. 
E.g., in a periodic homogenization problem with some small parameter
$\varepsilon$ and some smooth homogenized solution $u_0\in H^2(\Omega)$, 
the best approximation error is dominated by the best approximation error
of $u_0$ in the regime $H\lesssim\sqrt{\varepsilon}$ where it scales 
like $H^2$. By contrast, the error is typically not improved in the 
regime $\sqrt{\varepsilon}\gtrsim H\gtrsim \varepsilon$. 
This non-linear behavior of the best-approximation error in the 
pre-asymptotic regime is prototypical for homogenization problems 
with scale separation
and explains why the rough bound $H$ is suboptimal. 

The following result states an $L^2$ error estimate for
the method \eqref{e:mspg}.
The result is surprising because the perturbation of the right-hand 
side seems to be of order $H$ at first glance. In cases of scale 
separation the quadratic rate is indeed observed in the regime 
$H\lesssim \sqrt{\varepsilon}$ and cannot be explained by naive 
estimates.

\begin{proposition}\label{p:mspgapriori}
The solutions $u\in V$ to \eqref{e:modelweak}
and $u_H\in V_H$ to \eqref{e:mspg}
for right-hand side $f\in L^2(\Omega)$ satisfy
the following error estimate
\begin{equation*}
\frac{\|u - u_H\|_{L^2(\Omega)}}{\|f\|_{L^2(\Omega)}}
\lesssim
H^2 + \wcba(A,\T_H) .
\end{equation*}
\end{proposition}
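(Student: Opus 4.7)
The plan is to compare $u_H$ with the \emph{Galerkin} LOD approximation $u_H^\star:=(1-\Cor)\bar u_H=(1-\Cor)I_H u$ from Lemma~\ref{l:mp14lemma}(i) via a triangle inequality, and to treat the two contributions by separate Aubin--Nitsche duality arguments. I expect $\|u-u_H^\star\|_{L^2}$ to yield the clean $H^2$ rate and $\|u_H^\star-u_H\|_{L^2}$ to carry the $\wcba$ term that arises because \eqref{e:mspg} uses the unmodified right-hand side $F(v_H)$ rather than $F((1-\Cor)v_H)$.

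For the first piece I use the dual problem $a(v,z)=(u-u_H^\star,v)_{L^2}$ and exploit the key identity $z-(1-\Cor)I_H z=\Cor z$ (valid because $z-I_H z\in W$ and $\Cor$ restricted to $W$ is the identity). Combined with the Galerkin orthogonality of $u-u_H^\star$ with respect to $(1-\Cor)V_H$ and two applications of Lemma~\ref{l:mp14lemma}(iii) (to $u$ and to $z$), this yields $\|u-u_H^\star\|_{L^2}\lesssim H^2\|f\|_{L^2}$. For the second piece I decompose $u_H^\star-u_H=(\bar u_H-u_H)-\Cor\bar u_H$; the corrector term rewrites as $\Cor\bar u_H=(I_H u-u)+(u-u_H^\star)$ and is bounded by $(H^2+\wcba(A,\T_H))\|f\|_{L^2}$ using the first piece and the defining property of $\wcba$. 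Subtracting \eqref{e:msprimalPG} from \eqref{e:mspg} gives the perturbation identity
\[
a(\bar u_H-u_H,(1-\Cor)v_H)=-(f,\Cor v_H)_{L^2}\quad\text{for all }v_H\in V_H,
\]
from which the test $v_H=\bar u_H-u_H$, the Poincar\'e-type inequality $\|w\|_{L^2}\lesssim H\|\nabla w\|_{L^2}$ valid on $W=\ker I_H$, and the stability $\|\nabla v_H\|_{L^2}\lesssim\|\nabla(1-\Cor)v_H\|_{L^2}$ (because $v_H = I_H(1-\Cor)v_H$) produce the energy bound $\|\nabla(\bar u_H-u_H)\|_{L^2}\lesssim H\|f\|_{L^2}$.

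The main technical obstacle is the $L^2$ bound on $\bar u_H-u_H$: a plain Friedrichs estimate off this $H^1$ bound gives only $\mathcal O(H)$, so I run a second Aubin--Nitsche argument with $\phi$ solving $a(v,\phi)=(\bar u_H-u_H,v)_{L^2}$ and split $\phi=(1-\Cor)I_H\phi+\Cor\phi$. The $(1-\Cor)I_H\phi$ part becomes $-(f,\Cor I_H\phi)_{L^2}$ via the perturbation identity, and the algebraic rewriting $\Cor I_H\phi=\Cor\phi+(I_H\phi-\phi)$ is the crucial trick: it splits this into a corrector piece of size $H^2\|\bar u_H-u_H\|_{L^2}$ (using Lemma~\ref{l:mp14lemma}(iii) for $\phi$ and Poincar\'e on $W$) and an interpolation-error piece of size $\wcba\,\|\bar u_H-u_H\|_{L^2}$ (by definition of $\wcba$). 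The $\Cor\phi$ part reduces by $a$-orthogonality to $a(\Cor(\bar u_H-u_H),\Cor\phi)$ and is bounded by $H^2\|f\|_{L^2}\|\bar u_H-u_H\|_{L^2}$. Summing yields $\|\bar u_H-u_H\|_{L^2}\lesssim(H^2+\wcba)\|f\|_{L^2}$, and the triangle inequality closes the proof.
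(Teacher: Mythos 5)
Your argument is correct, and it reaches the estimate by a route that differs from the paper's in several concrete ways, although both rest on the same pillars (Lemma~\ref{l:mp14lemma}, duality, and the algebra of $I_H$ and $\Cor$). The paper works directly with $e_H=u_H-\bar u_H$ and a single dual problem with the \emph{modified} right-hand side $(e_H,I_Hv)_{L^2}$; this choice makes $a(e_H,\Cor z)=0$, so no analogue of your term $a(\bar u_H-u_H,\Cor\phi)$ ever appears, and hence no energy estimate for $\bar u_H-u_H$ is needed. The price is a Riesz-representative argument ($\tilde e$) and a nested duality (the $\zeta$-problem) to extract the $H^2$ from $\|z-(1-\Cor)I_Hz\|_{L^2}$. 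You instead use the plain dual problem $a(v,\phi)=(\bar u_H-u_H,v)_{L^2}$, pay for it with the extra (easy) energy bound $\|\nabla(\bar u_H-u_H)\|_{L^2}\lesssim H\|f\|_{L^2}$ obtained from the perturbation identity, the Poincar\'e inequality on $W=\ker I_H$ and the stability $v_H=I_H(1-\Cor)v_H$, and you replace the paper's nested duality by the simpler observation that $\Cor\phi\in W$, so $\|\Cor\phi\|_{L^2}\lesssim H\|\nabla\Cor\phi\|_{L^2}\lesssim H^2\|\bar u_H-u_H\|_{L^2}$ via Lemma~\ref{l:mp14lemma}(iii); your splitting $\Cor I_H\phi=\Cor\phi+(I_H\phi-\phi)$ is exactly the paper's algebraic identity in disguise, producing the same two contributions ($H^2$ and $\wcba$). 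Finally, you route the triangle inequality through the full LOD solution $(1-\Cor)I_Hu$ rather than through $\bar u_H=I_Hu$; this is slightly more roundabout but harmless, since $\|\Cor I_Hu\|_{L^2}$ is controlled by $\|u-I_Hu\|_{L^2}+\|u-(1-\Cor)I_Hu\|_{L^2}$ as you note. What your version buys is the avoidance of the $\tilde e$/nested-duality machinery at the cost of one additional energy estimate; what the paper's version buys is a leaner duality with fewer terms to control. All auxiliary facts you invoke ($\Cor|_W=\mathrm{id}$, $L^2$-quasi-optimality of $I_H$, Lemma~\ref{l:mp14lemma}(iii) applied to the dual solutions with $L^2$ data, symmetry of $a$) are available in the paper's setting, so the proof is complete as proposed.
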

\begin{proof}
Let $f\in L^2(\Omega)\setminus\{0\}$ and let $\bar u_H\in V_H$ solve 
\eqref{e:msprimalPG}.
We begin by analyzing the error
$e_H:= u_H-\bar u_H$. Let $z\in V$ denote the solution to
\begin{equation*}
a(v,z) = (e_H,I_H v)_{L^2(\Omega)}
\quad\text{for all }v\in V.
\end{equation*}
To see that the right-hand side is indeed represented by
an $L^2$ function, note that $I_H$ is continuous on $L^2(\Omega)$
and, hence, the right-hand side has a Riesz representative
$\tilde e\in L^2(\Omega)$ such that
$(e_H,I_H v)_{L^2(\Omega)} = (\tilde e,v)_{L^2(\Omega)}$.
In particular, $z$ solves \eqref{e:modelweak} with
right-hand side $\tilde e$.
Its $L^2$ norm is bounded with \eqref{e:IHstabL2} as follows
\begin{equation*}
\|\tilde e\|_{L^2(\Omega)}^2
= (e_H,I_H \tilde e_H)_{L^2(\Omega)}
\lesssim \|e_H\|_{L^2(\Omega)}
  \|\tilde e\|_{L^2(\Omega)},
\end{equation*}
hence
\begin{equation}\label{e:eHtilde}
\|\tilde e\|_{L^2(\Omega)}\lesssim \|e_H\|_{L^2(\Omega)}.
\end{equation}
We note that, for any $w\in W$,
we have
$a(w,z)=(e_H,I_Hw)_{L^2(\Omega)}=0$. 
Thus, we have
$a(e_H,\Cor z)=a(\Cor e_H,z)=0$.
With $(1-\Cor)z=(1-\Cor)I_H z$ we conclude
\begin{equation}\label{e:eHduality}
\|e_H\|_{L^2(\Omega)}^2
=
a(e_H ,z)
=
a(e_H,(1-\Cor)I_H z).
\end{equation}
Elementary algebraic manipulations with the projection $I_H$ show 
that
$$
- \Cor I_H z = (1-I_H)\big((1-\Cor)I_Hz-z\big)+(1-I_H)z.
$$
The relation \eqref{e:eHduality} and 
the solution properties \eqref{e:msprimalPG} and \eqref{e:mspg},
thus, lead to
\begin{equation}\label{e:Fsplit}
\|e_H\|_{L^2(\Omega)}^2
=
F(\Cor I_H z) 
=
\lvert
F ((1-I_H)((1-\Cor)I_Hz-z))
+
F((1-I_H)z)
\rvert
.
\end{equation}
We proceed by estimating the two terms on the right-hand side
of \eqref{e:Fsplit} separately.
For the second term in \eqref{e:Fsplit},
the $L^2$-best approximation property of $I_H$ 
and \eqref{e:eHtilde} reveal
\begin{equation}\label{e:dualIHv}
\begin{aligned}
\lvert F((1-I_H)z) \rvert
&\lesssim
\|f\|_{L^2(\Omega)} \|\tilde e\|_{L^2(\Omega)}
\inf_{v_H\in V_H} \frac{\|z- v_H\|_{L^2(\Omega)}}{\|\tilde e\|_{L^2(\Omega)}}
\\
&
\lesssim
\|f\|_{L^2(\Omega)} \|e_H\|_{L^2(\Omega)}
\wcba(A,\T_H) .
\end{aligned}
\end{equation}
For the first term in \eqref{e:Fsplit},
we obtain with the stability of $I_H$
and the Cauchy inequality that
\begin{equation*}
\lvert F ((1-I_H)((1-\Cor)I_Hz-z)) \rvert
\lesssim
\|f\|_{L^2(\Omega)}
\|z- (1-\Cor)I_Hz\|_{L^2(\Omega)}.
\end{equation*}
Let $\tilde g:=z- (1-\Cor)I_Hz$
and let $\zeta\in V$ denote the solution to
\begin{equation*}
a(\zeta,v) = (\tilde g, v)_{L^2(\Omega)}
\quad\text{for all }v\in V.
\end{equation*}
As stated in
Lemma~\ref{l:mp14lemma}(i),
the function $I_Hz\in V_H$ is the Galerkin approximation
to $z$ with method \eqref{e:msprimal}
with right-hand side $\tilde e$.
We, thus, have by symmetry of $a$ and the Galerkin orthogonality 
from Lemma~\ref{l:mp14lemma}(ii)
that
\begin{equation*}
\begin{aligned}
\|z- (1-\Cor)I_Hz\|_{L^2(\Omega)}^2
&=
a(\zeta,z- (1-\Cor)I_Hz)
\\
&=
a(\zeta-(1-\Cor)I_H\zeta,z- (1-\Cor)I_Hz).
\end{aligned}
\end{equation*}
Continuity of $a$ and Lemma~\ref{l:mp14lemma}(iii)
reveal that this is bounded by
$$
  H^2 \|\tilde{g}\|_{L^2(\Omega)}  \|\tilde e\|_{L^2(\Omega)}
  =
  H^2 \|z- (1-\Cor)I_Hz\|_{L^2(\Omega)}  \|\tilde e\|_{L^2(\Omega)}.
$$
Altogether, with \eqref{e:Fsplit},
\begin{equation*}
\frac{\|e_H\|_{L^2(\Omega)}}{\|f\|_{L^2(\Omega)}}
\lesssim
H^2 + \wcba(A,\T_H) .
\end{equation*}
Since
\begin{equation*}
\frac{\|u-\bar u_H\|_{L^2(\Omega)}}{\|f\|_{L^2(\Omega)}}
\lesssim
\wcba(A,\T_H),
\end{equation*}
(which follows from the fact that $\bar u_H = I_H u$),
the triangle inequality concludes the proof.
\end{proof}

With similar arguments it is possible to prove
that the coupling
$\ell\approx \lvert\log H\rvert$ is sufficient to derive 
the error bound
\begin{equation}\label{e:mspgl2est}
\| u- u_H\loc \|_{L^2(\Omega)}
\lesssim (H^2+\wcba(A,\T_H) )\, \|f\|_{L^2(\Omega)}.
\end{equation}
The proof is based on a similar argument as
in Proposition~\ref{p:mspgapriori}:
Since the $L^2$ distance of $u-\bar u_H\loc$ is controlled
by the right-hand side of \eqref{e:mspgl2est}
\cite{HenningPeterseim2013}
where $\bar u_H\loc$ solves a modified version of 
\eqref{e:mspgELL} with right-hand side
$F((1-\Cor\loc)v_H)$,
it is sufficient to control
$u_H\loc-\bar u_H\loc$ in the $L^2$ norm. This can be done with
a duality argument similar to that from the proof of 
Proposition~\ref{p:mspgapriori}. The additional tool needed therein
is the fact that
$$
\| \nabla(\Cor-\Cor\loc) I_H z\|_{L^2(\Omega)}
\lesssim \exp(-c\ell) C(\ell) \|\nabla z\|_{L^2(\Omega)}
$$
for the dual solution $z$
(see \cite[Proof of Thm.~4.13]{HenningPeterseim2013} for 
an outline of a proof)
where $C(\ell)$ is an overlap constant depending polynomially
on $\ell$.
The choice of $\ell\approx\lvert\log H\rvert$ therefore leads
to \eqref{e:mspgl2est}. The details are omitted here 
and the reader is referred to
\cite{MalqvistPeterseim2014,HenningPeterseim2013,Peterseim:2015,Kornhuber.Peterseim.Yserentant:2016}.

\section{Local effective coefficient}\label{s:local}
Throughout this section we consider oversampling parameters
chosen as $\ell\approx\lvert \log H\rvert$.

\subsection{Definition of the local effective coefficient}
The exponential decay
motivates to approximate the non-local bilinear form
$\mathfrak{a}\loc(\cdot,\cdot)$ by a quadrature-like procedure:
Define the piecewise constant coefficient
$A_H\loc\in P_0(\T_H;\mathbb R^{d\times d})$ by
\begin{equation*}
A_H\loc|_T:=
\fint_T A \,dx-\sum_{K\in\T_H} |K|\, \mathcal{K}_{T,K}\loc.
\end{equation*}
and the bilinear form $\tilde a\loc$ on $V\times V$ by
$$
\tilde a\loc(u,v) : = \int_\Omega \nabla u \cdot ( A_H\loc \nabla v)\,dx.
$$

\begin{remark}\label{r:homo1}
In analogy to classical periodic homogenization,
the local effective coefficient $A_H\loc$ can be written as
\begin{equation*}
\begin{aligned}
(A_H\loc)_{j,k}|_T
&
=
|T|^{-1} \int_{\Omega_T} e_j\cdot
                       \big(   A (\chi_T e_k-\nabla q_{T,k}\loc) \big)
\\
&
=
|T|^{-1} \int_{\Omega_T} (e_j-\nabla q_{T,j}\loc)\cdot
                       \big(   A (\chi_T e_k-\nabla q_{T,k}\loc) \big)
\end{aligned}
\end{equation*}
for the characteristic function $\chi_T$ of $T$
and the slightly enlarged averaging domain $\Omega_T$.
See Section~\ref{s:periodic} for further analogies to
homogenization theory in the periodic case.
\end{remark}

The localized multiscale method is to seek
$\uHt\loc\in V_H$ such that
\begin{equation}\label{e:mslump}
\tilde a\loc(\uHt\loc,v_H) = F(v_H)
\quad\text{for all } v_H\in V_H.
\end{equation}
The unique solvability of \eqref{e:mslump} is not guaranteed
a~priori.
It must be checked a~posteriori whether positive spectral
bounds $\alpha_H$, $\beta_H$ on $A_H\loc$ exist in the sense
of \eqref{e:Abounds}.
Throughout this paper we assume that such bounds exist,
that is, we assume that there exist positive numbers
$\alpha_H$, $\beta_H$ such that
\begin{equation}\label{e:AHbounds}
\alpha_H|\xi|^2
\leq \xi \cdot ( A_H\loc(x)\xi)
\leq \beta_H |\xi|^2
\end{equation}
for all $\xi\in\mathbb{R}^d$ and almost all $x\in\Omega$.

\subsection{Error analysis}
The goal of this section is to 
establish an error estimate for the error 
$$
\| u - \uHt\loc\|_{L^2(\Omega)}.
$$
Let $u_H\loc\in V_H$ solve
\eqref{e:mspgELL}. Then
the error estimate
\eqref{e:mspgl2est}
leads to
the a~priori error estimate
\begin{equation}\label{e:msHfestimate}
\| u- u_H\loc\|_{L^2(\Omega)} 
\lesssim 
(H^2+\wcba(A,\T_H) ) \, \|f\|_{L^2(\Omega)}.
\end{equation}
We employ the triangle inequality and merely estimate the 
difference $\| u_H\loc - \uHt\loc \|_{L^2(\Omega)}$.

With the finite localization parameter $\ell$, the
quasi-local coefficient $\mathcal A\loc$ is sparse
in the sense that $\mathcal A\loc(x,y) = 0$ whenever
$|x-y|> C \ell H$.
We note the following lemma which will be employed in the error
analysis.

\begin{lemma}\label{l:LpboundELL}
Given some $x\in\Omega$ with $x\in T$ for some $T\in\T_H$,
we have
\begin{equation*}
\| \mathcal K\loc(x,y)\|_{L^2(\Omega,dy)}
\lesssim
H^{-d/2} .
\end{equation*}
\end{lemma}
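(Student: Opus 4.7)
The plan is to estimate $\|\mathcal K\loc(x,\cdot)\|_{L^2(\Omega)}^2$ by expanding it as a sum over elements $K\in\T_H$, then to bound each entry of the element-wise matrix $\mathcal K_{T,K}\loc$ by an $L^2$ norm of $\nabla q_{T,k}\loc$ restricted to $K$, and finally to use the a~priori energy bound on the localized corrector to collapse the sum over $K$ into a single global estimate.

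More precisely, fix $T\in\T_H$ with $x\in T$. Since $\mathcal K\loc$ is piecewise constant on $\T_H\times\T_H$, I can write
\begin{equation*}
\|\mathcal K\loc(x,\cdot)\|_{L^2(\Omega)}^2
=
\sum_{K\in\T_H} |K|\,|\mathcal K_{T,K}\loc|^2 .
\end{equation*}
For each entry, the defining formula $(\mathcal K_{T,K}\loc)_{j,k}=\frac{1}{|T||K|}\,e_j\cdot\int_K A\nabla q_{T,k}\loc\,dx$ combined with Cauchy--Schwarz and the upper spectral bound $\beta$ from \eqref{e:Abounds} gives
\begin{equation*}
|(\mathcal K_{T,K}\loc)_{j,k}|^2
\lesssim
\frac{1}{|T|^2|K|}\,\|\nabla q_{T,k}\loc\|_{L^2(K)}^2.
\end{equation*}
Summing over $K$ and over $j,k\in\{1,\dots,d\}$ cancels the factor $|K|$ and assembles the pieces into the global energy of the corrector:
\begin{equation*}
\sum_{K\in\T_H} |K|\,|\mathcal K_{T,K}\loc|^2
\lesssim
\frac{1}{|T|^2}\sum_{k=1}^d\|\nabla q_{T,k}\loc\|_{L^2(\Omega)}^2 .
\end{equation*}

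The remaining step is the standard energy bound for the localized corrector. Testing \eqref{e:qTelementcorrELL} with $w=q_{T,k}\loc$, using the spectral bounds \eqref{e:Abounds} on both sides and Cauchy--Schwarz on the right, yields
\begin{equation*}
\alpha\|\nabla q_{T,k}\loc\|_{L^2(\Omega_T)}^2
\leq
\int_T \nabla q_{T,k}\loc\cdot (A e_k)\,dx
\leq
\beta\,|T|^{1/2}\,\|\nabla q_{T,k}\loc\|_{L^2(\Omega_T)},
\end{equation*}
hence $\|\nabla q_{T,k}\loc\|_{L^2(\Omega)}\lesssim |T|^{1/2}$. Plugging this into the previous display gives $\sum_K|K|\,|\mathcal K_{T,K}\loc|^2\lesssim 1/|T|\approx H^{-d}$ by quasi-uniformity, which is the claim.

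I do not expect a real obstacle here: the argument is a routine chain of Cauchy--Schwarz, variational energy bound, and quasi-uniformity. The only mildly delicate point is to arrange the estimate so that the sum over $K$ reassembles the $L^2(\Omega)$ norm of $\nabla q_{T,k}\loc$ exactly once (rather than losing, say, a factor depending on the number of elements in $\Omega_T$); this is achieved by pulling one factor $|K|$ out of the squared entry before summing, as done above.
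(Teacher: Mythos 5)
Your proposal is correct and follows essentially the same route as the paper's proof: bound each entry of $\mathcal K_{T,K}\loc$ by Cauchy--Schwarz (H\"older) and the boundedness of $A$, sum $|K|\,|\mathcal K_{T,K}\loc|^2$ over $K$ to reassemble the global $L^2$ norm of $\nabla q_{T,k}\loc$, and invoke the energy stability of the localized corrector problem \eqref{e:qTelementcorrELL} together with $\|e_k\|_{L^2(T)}^2=|T|$ and quasi-uniformity. The only difference is that you spell out the stability bound $\|\nabla q_{T,k}\loc\|_{L^2(\Omega)}\lesssim|T|^{1/2}$ explicitly, which the paper simply cites as the stability of the corrector problem.
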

\begin{proof}
From the definition of $\mathcal K\loc$, the boundedness of
$A$ and the H\"older inequality
we obtain for any $j,k\in\{1,\dots,d\}$ that
\begin{equation*}
| (\mathcal{K}\loc_{T,K})_{j,k} | \lesssim
\frac{1}{|T|\,|K|} \|\nabla q_{T,k}\|_{L^1(K)}
\lesssim
\frac{1}{|T|\,|K|^{1/2}} \|\nabla q_{T,k}\|_{L^2(K)}.
\end{equation*}
Hence, we conclude with the stability of problem 
\eqref{e:qTelementcorrELL} and  $\|e_k\|_{L^2(T)}^2  = |T|$ that
\begin{equation*} 
\| \mathcal K\loc(x,y)\|_{L^2(\Omega,dy)}^2
=
\sum_{K\in\T_H} |K| |\mathcal K\loc_{T,K}|^2
=
|T|^{-2} \|\nabla q_{T,k}\|_{L^2(\Omega)}^2
\lesssim H^{-d}.
\end{equation*}
This implies the assertion.
\end{proof}

In what follows, we abbreviate
\begin{equation}\label{e:rhodef}
\rho:=C H \lvert\log H\rvert
\end{equation}
for some appropriately chosen constant $C$.

\begin{proposition}[error estimate I]\label{p:posteriori}
Assume that \eqref{e:AHbounds} is satisfied.
Let $u_H\loc\in V_H$ solve \eqref{e:msnonlocal}
and let $\uHt\loc$ solve \eqref{e:mslump}.
Then,
\begin{equation*}
\|\nabla( u_H\loc - \uHt\loc)\|_{L^2(\Omega)}
\lesssim
H^{-d/2}
     \Big\|
       \| \nabla \uHt\loc(y) - \nabla \uHt\loc(x)\|_{L^2(B_{\rho}(x),dy)}
     \Big\|_{L^2(\Omega,dx)} .
\end{equation*}

\end{proposition}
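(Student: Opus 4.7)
The plan is to combine a Galerkin orthogonality for $e_H:=u_H\loc-\uHt\loc\in V_H$ with two applications of Cauchy--Schwarz, exploiting the sparsity of $\mathcal{A}_H\loc$ together with the $L^2$-bound of Lemma~\ref{l:LpboundELL}.

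First I would express the bilinear-form difference explicitly. Since $\nabla v_H$ is piecewise constant on $\T_H$ and the definitions directly yield $A_H\loc|_T=\sum_{K\in\T_H}|K|\,\mathcal{A}_H\loc|_{T,K}$, a short calculation gives
\begin{equation*}
(\mathfrak{a}\loc-\tilde a\loc)(v_H,w_H)
=\int_\Omega\!\int_\Omega\bigl(\nabla v_H(y)-\nabla v_H(x)\bigr)\cdot\bigl(\mathcal{A}_H\loc(x,y)\,\nabla w_H(x)\bigr)\,dy\,dx
\end{equation*}
for all $v_H,w_H\in V_H$. The two solution identities $\mathfrak{a}\loc(u_H\loc,v_H)=F(v_H)=\tilde a\loc(\uHt\loc,v_H)$ then produce, after subtracting $\mathfrak{a}\loc(\uHt\loc,v_H)$ and testing with $v_H=e_H$, the Galerkin relation
\begin{equation*}
\mathfrak{a}\loc(e_H,e_H)=(\tilde a\loc-\mathfrak{a}\loc)(\uHt\loc,e_H).
\end{equation*}
At this point I would invoke coercivity of $\mathfrak{a}\loc$ on $V_H$ with constant $\sim\alpha$, which for $\ell\approx|\log H|$ is standard from the LOD analysis in \cite{MalqvistPeterseim2014,HenningPeterseim2013}, giving $\|\nabla e_H\|_{L^2(\Omega)}^2\lesssim\mathfrak{a}\loc(e_H,e_H)$.

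To estimate the right-hand side, I would exploit the sparsity $\supp q_{T,k}\loc\subseteq\Omega_T=\nei^\ell(T)$, which forces $\mathcal{A}_H\loc(x,y)=0$ for $|x-y|\gtrsim\rho$ and reduces the inner integration to $B_\rho(x)$. A first Cauchy--Schwarz in $y$ combined with Lemma~\ref{l:LpboundELL} (together with the elementary observation that the diagonal contribution $\frac{\delta_{T,K}}{|K|}\fint_T A\,dx$ is also of order $H^{-d/2}$ in $L^2(\Omega,dy)$) gives $\|\mathcal{A}_H\loc(x,\cdot)\|_{L^2(\Omega,dy)}\lesssim H^{-d/2}$ pointwise in $x$. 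A second Cauchy--Schwarz in $x$ then factors out $\|\nabla e_H\|_{L^2(\Omega)}$, and division by this quantity yields the claimed bound.

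The main subtlety is the choice of coercivity in the second step. Assumption \eqref{e:AHbounds} only gives coercivity of $\tilde a\loc$; if one used that instead, writing $u_H\loc=\uHt\loc+e_H$ in the analogous identity $\tilde a\loc(e_H,e_H)=(\tilde a\loc-\mathfrak{a}\loc)(u_H\loc,e_H)$ would leave the residual term $(\tilde a\loc-\mathfrak{a}\loc)(e_H,e_H)$, which the above estimates control only by a factor $(H|\log H|)^{d/2}H^{-d/2}\|\nabla e_H\|^2=|\log H|^{d/2}\|\nabla e_H\|^2$, too weak to be absorbed on the left. Routing the argument through the $\mathfrak{a}\loc$-Galerkin identity circumvents this difficulty and produces a bound featuring differences of $\nabla\uHt\loc$ only, as required.
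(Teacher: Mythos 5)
Your proof is correct and takes essentially the same route as the paper: the Galerkin-type identity obtained from $\mathfrak{a}\loc(u_H\loc,\cdot)=F(\cdot)=\tilde a\loc(\uHt\loc,\cdot)$, coercivity of $\mathfrak{a}\loc$ on $V_H$ for $\ell\approx\lvert\log H\rvert$ (which the paper likewise only sketches via the exponential closeness of $\Cor$ and $\Cor\loc$ and an absorption argument), the rewriting based on $A_H\loc(x)=\int_\Omega\mathcal A_H\loc(x,y)\,dy$, and two Cauchy--Schwarz applications combined with Lemma~\ref{l:LpboundELL} and the sparsity of the kernel. The only cosmetic deviation is that the paper discards the diagonal part of $\mathcal A_H\loc$ by observing that $\nabla\uHt\loc(y)-\nabla\uHt\loc(x)$ vanishes when $x$ and $y$ lie in the same element and then applies the lemma to $\mathcal K\loc$, whereas you bound the full kernel, correctly noting the diagonal contribution is itself of order $H^{-d/2}$ in $L^2(\Omega,dy)$; both yield the stated estimate.
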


\begin{proof}

Denote $e_H:= \uHt\loc - u_H\loc$.
In the idealized case, $\ell=\infty$, the orthogonality
\eqref{e:cororth} and relation \eqref{e:frakaeq} show that
$$
\|\nabla(1-\Cor\loc)e_H\|_{L^2(\Omega)}^2
\lesssim
\mathfrak{a}\loc(e_H,e_H) .
$$
The case $\ell\gtrsim|\log H|$
again follows ideas from \cite{MalqvistPeterseim2014}
with the exponential-in-$\ell$ closeness of $\Cor$ and $\Cor_\ell$
and is merely sketched here.
From the stability of $I_H$ and the properties of the 
fine-scale projection $\Cor\loc$
we observe (with contrast-dependent constants)
\begin{equation*}
\begin{aligned}
\|\nabla e_H\|_{L^2(\Omega)}^2
=
\|\nabla I_H e_H\|_{L^2(\Omega)}^2
&
=
\|\nabla I_H (1-\Cor\loc)e_H\|_{L^2(\Omega)}^2
\\
&
\lesssim
\|\nabla(1-\Cor\loc)e_H\|_{L^2(\Omega)}^2
\\
&
\lesssim
\mathfrak{a}\loc(e_H,e_H)
+ \exp(-c\ell) \|\nabla e_H\|_{L^2(\Omega)}^2
\end{aligned}
\end{equation*}
for some constant $c>0$.
Hence, with positive constants $C_1$, $C_2$,
\begin{equation*}
\|\nabla e_H\|_{L^2(\Omega)}^2
\leq
C_1
\mathfrak{a}\loc(e_H,e_H)
+ C_2 \exp(-c\ell) \|\nabla e_H\|_{L^2(\Omega)}^2.
\end{equation*}
If, for some sufficiently large $r$, the parameter $\ell$ is chosen
to satisfy $\ell\geq r\lvert\log H\rvert$ such that
$ C_2 \exp(-c\ell) \leq 1/2$,
then the second term on the right-hand side
can be absorbed.
Thus,
we proceed with \eqref{e:msnonlocal} and \eqref{e:mslump} as
\begin{equation*}
\|\nabla e_H\|_{L^2(\Omega)}^2
\lesssim
\mathfrak{a}\loc(\uHt\loc - u_H\loc,e_H)
=
\mathfrak{a}\loc (\uHt\loc,e_H)
-
\tilde a\loc(\uHt\loc,e_H) .
\end{equation*}
The right-hand side can be rewritten as
\begin{equation*}
\begin{aligned}
&
\mathfrak a\loc (\uHt\loc,e_H)
-
\tilde a\loc(\uHt\loc,e_H) 
\\
&
=
\int_\Omega 
 \int_\Omega
   (\nabla \uHt\loc(y)-\nabla \uHt\loc(x))
   \cdot
   \bigg[ \mathcal A_H\loc(x,y) 
            \nabla e_H(x)\bigg]
         \,dy\,dx
\\
&\quad
+
\int_\Omega 
   \bigg[\uHt\loc(x)\cdot
      \left(\int_\Omega \mathcal A_H\loc(x,y)\,dy - A_H\loc(x)\right)
      \nabla e_H(x)
     \bigg]
          \,dx.
\end{aligned}
\end{equation*}
The second term vanishes by definition of $A_H\loc$.
Hence, the combination of the preceding arguments with
the Cauchy inequality leads to
\begin{equation*}
\|\nabla e_H\|_{L^2(\Omega)}^2\lesssim
\|\nabla e_H\|_{L^2(\Omega)}
\Big\| \int_{B_\rho(x)} \mathcal A_H\loc(x,y) ^*
          (\nabla \uHt\loc(y)-\nabla \uHt\loc(x)) \,dy \Big\|_{L^2(\Omega,dx)} 
,
\end{equation*}
where it was used that $\mathcal A_H\loc(x,y) =0$ whenever
$|x-y|>\rho$.
Note that $(\nabla \uHt\loc(y)-\nabla \uHt\loc(x))=0$
for all $x$ and $y$ that belong to the same element $T\in\T_H$.
Thus, $\mathcal A_H\loc(x,y)$ in the above expression can be
replaced by $\mathcal K_H\loc(x,y)$.
This and 
division by $\|\nabla e_H\|_{L^2(\Omega)}$  lead to
\begin{equation}\label{e:aposteriori}
\begin{aligned}
&
\|\nabla e_H\|_{L^2(\Omega)}
\\
&
\qquad
\lesssim
\sqrt{\phantom{.}}
\left(\int_\Omega 
  \bigg|
  \int_{B_{\rho}(x)}
     \mathcal K\loc(x,y)^* (\nabla \uHt\loc(y) - \nabla \uHt\loc(x))
   \,dy
  \bigg|^2 dx
\right)  .
\end{aligned}
\end{equation}
This term
can be bounded with the 
Cauchy inequality
and Lemma~\ref{l:LpboundELL} by
\begin{equation*}
\begin{aligned}
&\quad
\sqrt{\phantom{.}}
\left(\int_\Omega 
  \bigg|
    \|\mathcal K\loc(x,y)\|_{L^2(B_{\rho}(x),dy)}  
     \| \nabla \uHt\loc(y) - \nabla \uHt\loc(x)\|_{L^2(B_{\rho}(x),dy)}
  \bigg|^2 \,dx
\right)
\\
&
\qquad
\lesssim
 H^{-d/2} 
     \Big\|
       \| \nabla \uHt\loc(y) - \nabla \uHt\loc(x)\|_{L^2(B_{\rho}(x),dy)}
     \Big\|_{L^2(\Omega,dx)}.
\end{aligned}
\end{equation*}
This finishes the proof.
\end{proof}
It is worth noting that the error bound in 
Proposition~\ref{p:posteriori} can be evaluated without knowledge
of the exact solution. Hence, Proposition~\ref{p:posteriori}
can be regarded as an a~posteriori error estimate.
Formula \eqref{e:aposteriori} could also be an option if it is
available.
We expect Proposition~\ref{p:posteriori} to be rather sharp.
Below we provide the main a~priori error estimate,
Proposition~\ref{p:priori}, 
which is fundamental for the mentioned link between 
analytical and numerical homogenization.
The following technical lemma is required.

\begin{lemma}[existence of a regularized coefficient]\label{l:smoothed}
Let $A_H\in P_0(\T_H;\mathbb R^{d\times d})$ be a piecewise
constant field of $d\times d$ matrices that satisfies
the spectral bounds \eqref{e:AHbounds}.
Then there exists a Lipschitz continuous coefficient
$A_H\smooth \in W^{1,\infty}(\Omega;\mathbb R^{d\times d})$
satisfying the following three properties.
1) The piecewise integral mean is conserved, i.e.,
\begin{equation*}
\int_T A_H\smooth \,dx = \int_T A_H\,dx
\quad\text{for all }T\in \T_H.
\end{equation*}
2) The eigenvalues of $\sym(A_H\smooth)$ lie in the interval
$[\alpha_H/2,2\beta_H]$.
3) The derivative satisfies the bound
\begin{equation*}
\|\nabla A_H\smooth\|_{L^\infty(\Omega)}
\leq C
\eta(A_H)
\end{equation*}
for some constant $C$ that depends on the shape-regularity of
$\T_H$
and for the expression
\begin{equation}\label{e:etadef}
\eta(A_H) :=
H^{-1} \| [A_H] \|_{L^\infty(\mathcal F_H)}
\big(1+\alpha_H^{-1}\| [A_H] \|_{L^\infty(\mathcal F_H)}\big).
\end{equation}
Here $[\cdot]$ defines the inter-element jump and
$\mathcal F_H$ denotes the set of
interior hyper-faces of $\T_H$.
\end{lemma}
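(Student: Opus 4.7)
The plan is to build $A_H\smooth$ as a small smooth perturbation of a continuous piecewise affine interpolant of $A_H$, followed by an element-wise bump correction that restores the integral means without destroying regularity.

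First, I would produce a continuous approximation $\bar A$ by nodal averaging: for each interior vertex $z$ of $\T_H$, let $A^*(z)$ be a convex combination (for instance the arithmetic mean) of the values $A_H|_T$ over elements $T$ containing $z$, and extend via the piecewise linear hat functions $\varphi_z$, so $\bar A := \sum_z A^*(z)\varphi_z$. Because $\bar A(x)$ is a convex combination of the matrices $A_H|_T$ at neighbouring elements, the eigenvalues of $\sym(\bar A(x))$ automatically lie in $[\alpha_H,\beta_H]$. Using shape-regularity and the fact that for adjacent vertices $z,z'$ the difference $A^*(z)-A^*(z')$ is an average of at most $C$ face-jumps of $A_H$, one obtains the pointwise bound $\|\nabla \bar A\|_{L^\infty(\Omega)}\lesssim H^{-1}\|[A_H]\|_{L^\infty(\mathcal F_H)}$.

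Second, I would restore piecewise integral means. For every $T\in\T_H$ set $d_T := A_H|_T - \fint_T \bar A$; since $\bar A$ oscillates by at most $C\|[A_H]\|_{L^\infty(\mathcal F_H)}$ on $T$, the matrix $d_T$ satisfies $|d_T|\lesssim\|[A_H]\|_{L^\infty(\mathcal F_H)}$. Pick a scalar bump $\psi_T\in W^{1,\infty}_0(T)$ depending only on the reference element, with $\fint_T\psi_T=1$, $\|\psi_T\|_{L^\infty}\le C$ and $\|\nabla\psi_T\|_{L^\infty}\le C/H$, and define
\begin{equation*}
A_H\smooth(x) := \bar A(x) + d_T\,\psi_T(x) \qquad\text{for } x\in T,\ T\in\T_H.
\end{equation*}
Since $\psi_T$ vanishes on $\partial T$, the resulting field is globally Lipschitz. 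Property~1 is immediate from $\int_T A_H\smooth = \int_T\bar A + d_T|T| = |T|A_H|_T$, and property~3 (in its linear-in-jump part) follows by combining $\|\nabla\bar A\|_{L^\infty}\lesssim H^{-1}\|[A_H]\|_{L^\infty(\mathcal F_H)}$ with $\|\nabla(d_T\psi_T)\|_{L^\infty}\lesssim H^{-1}|d_T|$.

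The main obstacle I expect is property~2 together with the second term in $\eta(A_H)$. Although $\sym(\bar A)$ lies in $[\alpha_H,\beta_H]$, the additive correction $d_T\psi_T$ can shift eigenvalues by an amount of order $\|[A_H]\|_{L^\infty(\mathcal F_H)}$, which may drop below $\alpha_H/2$ in the regime where the jump is comparable to $\alpha_H$. To remedy this I would, on those elements where a violation would occur, compose the perturbation with a smooth spectral truncation onto the cone of symmetric matrices with eigenvalues in $[\alpha_H/2,2\beta_H]$, equivalently redistributing $\psi_T$ over a slightly larger part of $T$ so that its pointwise amplitude stays within tolerance while $\int_T\psi_T=|T|$ is preserved. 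This safeguard rescales the correction by a factor at most $1+C\alpha_H^{-1}\|[A_H]\|_{L^\infty(\mathcal F_H)}$, producing the additional term $H^{-1}\alpha_H^{-1}\|[A_H]\|_{L^\infty(\mathcal F_H)}^2$ in the Lipschitz bound; together with the linear term this yields the stated estimate $\|\nabla A_H\smooth\|_{L^\infty(\Omega)}\le C\,\eta(A_H)$.
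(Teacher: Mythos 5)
Your construction correctly identifies the obstacle, but neither of the two remedies you offer for Property~2 actually works, and that is exactly where the paper inserts an idea you are missing.

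The difficulty is quantitative: after the nodal averaging $\bar A$ on $\T_H$, the defect $d_T = A_H|_T - \fint_T \bar A$ can be of size comparable to $\|[A_H]\|_{L^\infty(\mathcal F_H)}$, which may be much larger than $\alpha_H$. A bump $\psi_T$ with $\fint_T\psi_T=1$ and $\psi_T|_{\partial T}=0$ necessarily has $\|\psi_T\|_{L^\infty(T)}\geq 1$ (indeed strictly greater because a boundary layer contributes nothing), so there is no reshaping or ``redistribution'' that makes the pointwise amplitude of $d_T\psi_T$ smaller than $|d_T|$ while keeping the mean fixed. Consequently, when the jumps are large relative to $\alpha_H$, Property~2 fails for your $A_H\smooth$ and cannot be rescued by flattening $\psi_T$. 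Your alternative, a pointwise spectral truncation of $A_H\smooth$ onto matrices with eigenvalues in $[\alpha_H/2,2\beta_H]$, is nonlinear and would alter $\int_T A_H\smooth$, destroying Property~1.

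The paper avoids this by shrinking $|d_T|$ itself, not by reshaping the bump. It performs the continuous averaging $E_L A_H$ on a uniformly refined mesh $\T_L$ (mesh-size $2^{-L}H$). Because $E_L A_H$ differs from $A_H$ only in a layer of width $\sim 2^{-L}H$ around the $\T_H$-faces, one has $\fint_T|A_H-E_L A_H|\lesssim 2^{-L}\|[A_H]\|_{L^\infty(\mathcal F_H(\omega_T))}$, while $\|A_H-E_L A_H\|_{L^\infty(T)}\lesssim\|[A_H]\|_{L^\infty(\mathcal F_H(\omega_T))}$. Choosing $L\approx|\log(\alpha_H^{-1}\|[A_H]\|_{L^\infty(\mathcal F_H)})|$ makes the integral defect (and hence the bubble amplitude) at most $\alpha_H/2$, securing Property~2, and this is where the factor $\alpha_H^{-1}\|[A_H]\|_{L^\infty(\mathcal F_H)}$ in $\eta(A_H)$ comes from: on the thin layers near faces, the gradient of $E_L A_H$ is of order $2^{L}H^{-1}\|[A_H]\|\sim H^{-1}\alpha_H^{-1}\|[A_H]\|^2$. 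Without passing to a finer averaging scale (or, equivalently, mollifying $A_H$ at scale $\delta H$ with $\delta$ small enough to control the mean defect), your construction does not yield the stated spectral bound.
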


\begin{proof}
Consider a refined triangulation $\T_L$
resulting from $L$ uniform refinements of $\T_H$.
In particular, the mesh-size in $\T_L$ is of the order
$2^{-L} H$.
Let $E_L A_H$ denote
the $\T_L$-piecewise affine and continuous function
that takes at every interior vertex
the arithmetic mean of the nodal values of $A_H$ on the 
adjacent elements of $\T_L$ (similar to \eqref{e:EHdef}).
Clearly, for this convex combination
the eigenvalues of $\sym(E_L A_H)$ range within the interval
$[\alpha_H,\beta_H]$.
It is not difficult to prove that, for any $T\in\T_H$,
\begin{equation}\label{e:AHELbound}
\fint_T | A_H - E_L A_H | \,dx
\lesssim
2^{-L} \| [A_H] \|_{L^\infty(\mathcal F_H(\omega_T))}
\end{equation}
as well as
\begin{equation}\label{e:AHELsup}
\| A_H - E_L A_H \|_{L^\infty(T)}
\lesssim
\| [A_H] \|_{L^\infty(\mathcal F_H(\omega_T))} .
\end{equation}
Here,
$\mathcal F_H(\omega_T)$ denotes the set of
interior hyper-faces of $\T_H$
that share a point with $T$.
Let, for any $T\in\T_H$, $b_T\in H^1_0(T)$ denote a 
positive polynomial bubble function with
$\fint_T b_T\,dx =1$ and $\|b_T\|_{L^\infty(T)}\approx 1$.
The regularized coefficient
$A_H\smooth= E_L(A_H) + 
 b_T \fint_T (A_H - E_L(A_H))\,dx$ 
has, for any $T\in\T_H$, the integral mean
$\fint_T A_H\smooth\,dx=\fint_T A_H\,dx$.
For any $\xi\in\mathbb R^d$ with $|\xi|=1$ and
any $T\in\T_H$, the estimate \eqref{e:AHELbound} shows
\begin{equation*}
\begin{aligned}
\left|\xi\cdot\fint_T (A_H - E_L A_H)\,dx\;b_T \xi\right|
&\leq
 \left|\fint_T (A_H - E_L A_H)\,dx\;b_T\right| 
\\
&\leq
  C 2^{-L} \| [A_H] \|_{L^\infty(\mathcal F_H(\omega_T))}.
\end{aligned}
\end{equation*}
If $L$ is chosen to be of the order
$\lvert\log(\alpha_H^{-1}C\| [A_H] \|_{L^\infty(\mathcal F_H)})\rvert$
(for small jumps of $ A_H$ it can be chosen of order $1$),
then
\begin{equation*}
\left|\xi\cdot\fint_T (A_H - E_L A_H)\,dx\;b_T \xi\right|
\leq
\alpha/2.
\end{equation*}
This and the triangle inequality prove the claimed spectral bound
on $\sym(A_H\smooth)$.
For the bound on the derivative of $A_H\smooth$,
let $t\in T_L$ and $T\in\T_H$ such that $t\subseteq T$.
The diameter of $t$ is of order 
$2^{-L}H$.
Since $\|\nabla b_T\|_{L^\infty(T)}\lesssim H^{-1}$,
the triangle and inverse inequalities
therefore yield with the above choice of $L$
(note that $\nabla (A_H|_T) = 0$)
\begin{equation*}
\begin{aligned}
\| \nabla A_H\smooth \|_{L^\infty(t)}
&
\lesssim \| \nabla (A_H - E_L(A_H))\|_{L^\infty(t)}
 + H^{-1} \| A_H - E_L(A_H)\|_{L^\infty(t)}
\\
&
\lesssim
H^{-1} \| [A_H] \|_{L^\infty(\mathcal F_H(\omega_T))}
\big(1+\alpha_H^{-1}\| [A_H] \|_{L^\infty(\mathcal F_H(\omega_T)}\big).
\end{aligned}
\end{equation*}
This proves the assertion.
\end{proof}

By Lemma~\ref{l:smoothed}, there
exists a  coefficient
$A_H\smooth\in W^{1,\infty}(\Omega)$
such that
$A_H\loc$ is the piecewise $L^2$ projection of $A_H\smooth$
onto the piecewise constants.
Let $u\smooth\in V$ solve
\begin{equation}\label{e:PDEsmooth}
\int_\Omega\nabla {u\smooth}\cdot ( A_H\smooth \nabla v) \,dx =
 F(v)\quad\text{for all } v\in V.
\end{equation}
In particular, $\uHt$ is the finite element approximation to
$u\smooth$.
In the following, $s$ refers to the 
$H^{1+s}(\Omega)$ regularity
index of a function.
Recall that the $H^{1+s}(\Omega)$ norm
\cite{Adams1975}
of some a function $v$ is given by
\begin{equation}\label{e:W1snormdef}
\|v\|_{H^{1+s}(\Omega)}
=
\left[
\|v\|_{H^{1}(\Omega)}^2
+
\int_\Omega  
     \int_\Omega
     \frac{| \nabla v(x) - \nabla v(y)|^2}{\lvert x-y\rvert^{d+2s}}
      \,dy\,dx  
\right]^{1/2} .
\end{equation}
We have the following error estimate.

\begin{proposition}[error estimate II]\label{p:priori}
Let $\ell\approx\lvert\log H\rvert$ and assume
that \eqref{e:AHbounds} is satisfied.
Let $u_H\loc$ solve \eqref{e:mspgELL}
and let $\uHt\loc$ solve \eqref{e:mslump}.
Assume furthermore that the solution $u\smooth$ to
\eqref{e:PDEsmooth} belongs to $H^{1+s}(\Omega)$
for some $0<s\leq 1$.
Then,
\begin{equation*}
\|\nabla( u_H\loc - \uHt\loc)\|_{L^2(\Omega)}
\lesssim
H^s
\lvert \log H\rvert^{s+d/2} 
\big(1+\eta(A_H\loc)\big)^s
\| f \|_{L^2(\Omega)}.
\end{equation*}

\end{proposition}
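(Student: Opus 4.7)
The plan is to apply Proposition~\ref{p:posteriori} and then estimate the double-integral appearing on its right-hand side by introducing $u\smooth$ as an intermediate object and exploiting its assumed $H^{1+s}(\Omega)$ regularity.

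The first step I would take is to observe that $\uHt\loc$ is exactly the standard Galerkin finite element approximation to $u\smooth$ on $V_H$. Indeed, since gradients of $V_H$-functions are $\T_H$-piecewise constant, the bilinear form $\tilde a\loc(v_H,w_H)$ depends on $A_H\loc$ only through its elementwise integral means, and by Lemma~\ref{l:smoothed}(1) these coincide with those of $A_H\smooth$. Hence $\tilde a\loc$ and the continuous bilinear form associated with $A_H\smooth$ agree on $V_H\times V_H$, which identifies $\uHt\loc$ with the $V_H$-Galerkin approximation to \eqref{e:PDEsmooth}. Céa's lemma combined with the assumed $H^{1+s}$ regularity then yields the auxiliary estimate $\|\nabla(\uHt\loc-u\smooth)\|_{L^2(\Omega)}\lesssim H^s\|u\smooth\|_{H^{1+s}(\Omega)}$.

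Next, abbreviating $\rho=CH\lvert\log H\rvert$, I would split $\nabla\uHt\loc(y)-\nabla\uHt\loc(x)$ via the triangle inequality into $\nabla u\smooth(y)-\nabla u\smooth(x)$ plus the two finite element errors $\nabla(\uHt\loc-u\smooth)(x)$ and $\nabla(\uHt\loc-u\smooth)(y)$. For the smooth contribution, multiplying and dividing by $|x-y|^{d+2s}$ inside the ball and invoking the Gagliardo-seminorm definition \eqref{e:W1snormdef} gives
\[
\int_\Omega\!\!\int_{B_\rho(x)}\!|\nabla u\smooth(y)-\nabla u\smooth(x)|^2\,dy\,dx\lesssim \rho^{d+2s}\|u\smooth\|_{H^{1+s}(\Omega)}^2.
\]
For the two FE-error contributions, Fubini reduces the double integral to $\lvert B_\rho\rvert\,\|\nabla(\uHt\loc-u\smooth)\|_{L^2(\Omega)}^2\lesssim \rho^d H^{2s}\|u\smooth\|_{H^{1+s}(\Omega)}^2$, which is dominated by the smooth-part bound since $\rho\geq H$.

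Finally, combining these estimates with Proposition~\ref{p:posteriori} and the scaling $\rho\approx H\lvert\log H\rvert$ gives the prefactor $H^{-d/2}\rho^{d/2+s}\approx H^s\lvert\log H\rvert^{d/2+s}$. It then remains to absorb $\|u\smooth\|_{H^{1+s}(\Omega)}$ into $\|f\|_{L^2(\Omega)}$. To this end I would rely on elliptic regularity for \eqref{e:PDEsmooth} with the Lipschitz coefficient $A_H\smooth$: an $H^2$-estimate of the form $\|u\smooth\|_{H^2(\Omega)}\lesssim(1+\|\nabla A_H\smooth\|_{L^\infty(\Omega)})\|f\|_{L^2(\Omega)}$ interpolated against the trivial $H^1$-bound $\|u\smooth\|_{H^1(\Omega)}\lesssim \|f\|_{L^2(\Omega)}$ yields, by convexity of the Sobolev scale, $\|u\smooth\|_{H^{1+s}(\Omega)}\lesssim(1+\|\nabla A_H\smooth\|_{L^\infty(\Omega)})^s\|f\|_{L^2(\Omega)}$. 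Lemma~\ref{l:smoothed}(3) converts the Lipschitz norm into $\eta(A_H\loc)$, producing the claimed $(1+\eta(A_H\loc))^s$ factor. The main subtle step is this last fractional regularity estimate, since the exponent $s$ on $(1+\eta)^s$ reflects the interpolation between the $H^1$- and $H^2$-theories rather than a direct $H^{1+s}$-a~priori estimate; all other steps are essentially Cauchy–Schwarz, Fubini, and a careful use of the Gagliardo seminorm.
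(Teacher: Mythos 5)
Your proposal is correct and follows essentially the same route as the paper's proof: invoke Proposition~\ref{p:posteriori}, split $\nabla\uHt\loc(y)-\nabla\uHt\loc(x)$ via $u\smooth$ into the Gagliardo-seminorm contribution (bounded by $\rho^{(d+2s)/2}\|u\smooth\|_{H^{1+s}(\Omega)}$) and two Fubini-type finite element error contributions controlled by the energy-norm Galerkin estimate, and then absorb $\|u\smooth\|_{H^{1+s}(\Omega)}$ into $\|f\|_{L^2(\Omega)}$ by interpolating the $H^2$ regularity estimate against the $H^1$ bound and applying Lemma~\ref{l:smoothed}. Your explicit verification that $\uHt\loc$ is the Galerkin approximation of $u\smooth$ (via piecewise constant gradients and the preserved elementwise means) is a detail the paper only states, but it is the same argument in substance.
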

\begin{proof}
Recall the estimate from Proposition~\ref{p:posteriori}
\begin{equation*}
\|\nabla( u_H\loc - \uHt\loc)\|_{L^2(\Omega)}
\lesssim 
H^{-d/2}
     \Big\|
       \| \nabla \uHt\loc(y) - \nabla \uHt\loc(x)\|_{L^2(B_{\rho}(x),dy)}
     \Big\|_{L^2(\Omega,dx)} .
\end{equation*}
To bound the norm on the right-hand side,
we denote $e:=\nabla(\uHt\loc - u\smooth)$
and infer with the triangle inequality
\begin{equation}\label{e:triineq}
\begin{aligned}
&
     \Big\|
       \| \nabla \uHt\loc(y) - \nabla \uHt\loc(x)\|_{L^2(B_{\rho}(x),dy)}
     \Big\|_{L^2(\Omega,dx)} 
\\
&
\qquad\qquad
\leq
   \Big\|
       \| e(y)\|_{L^2(B_{\rho}(x),dy)}
     \Big\|_{L^2(\Omega,dx)} 
\\
&
\qquad\qquad\quad
+
   \Big\|
       \| \nabla u\smooth(y) - \nabla u\smooth(x)\|_{L^2(B_{\rho}(x),dy)}
     \Big\|_{L^2(\Omega,dx)} 
\\
&
\qquad\qquad\quad
+
   \Big\|
       \| e(x)\|_{L^2(B_{\rho}(x),dy)}
     \Big\|_{L^2(\Omega,dx)} .
\end{aligned}
\end{equation}
The square of the first term on the right-hand side of \eqref{e:triineq}
satisfies
\begin{equation*}
\begin{aligned}
\Big\|
       \| e(y)\|_{L^2(B_{\rho}(x),dy)}
     \Big\|_{L^2(\Omega,dx)}^2
&=
\int_\Omega
\int_{B_\rho(x)}
|e(y)|^2
\,dy
\,dx
\\
&
=
\int_\Omega
\int_{\{x \text{ with } y\in B_\rho(x)\}}
|e(y)|^2
\,dx
\,dy
\lesssim
\rho^d \| e\|_{L^2(\Omega)}^2.
\end{aligned}
\end{equation*}
Similarly, the third term on the right-hand side of
\eqref{e:triineq} satisfies
\begin{equation*}
\Big\|
       \| e(x)\|_{L^2(B_{\rho}(x),dy)}
     \Big\|_{L^2(\Omega,dx)}^2
=
\int_\Omega
\int_{B_\rho(x)}
|e(x)|^2
\,dy
\,dx
\lesssim
\rho^d \| e\|_{L^2(\Omega)}^2.
\end{equation*}
The second term on the right-hand side of
\eqref{e:triineq} reads
for any $0<s<1$ as
 \begin{equation*}
 \begin{aligned}
&   \Big\|
       \| \nabla u\smooth(y) - \nabla u\smooth(x)\|_{L^2(B_{\rho}(x),dy)}
     \Big\|_{L^2(\Omega,dx)}  
\\
&
\qquad
=
\rho^{(d+2s)/2}
\left(\int_\Omega 
      \int_{B_\rho(x)}
     \frac{| \nabla u\smooth(x) - \nabla u\smooth(y)|^2}{\rho^{d+2s}}
      \,dy\,dx
 \right)^{1/2} \bigg]
 \\
&
\qquad
\lesssim
\rho^{(d+2s)/2} 
\|u\smooth\|_{H^{1+s}(\Omega)} .
 \end{aligned}
 \end{equation*}
Here we have used the representation
\eqref{e:W1snormdef} and the fact that the value of the double
integral increases, when, first, in the denominator $\rho$ is replaced
by $\lvert x-y\rvert$ and thereafter the integration domain
of the inner integral is replaced by $\Omega$.
In conclusion,
\begin{equation*}
\begin{aligned}
&
     \Big\|
       \| \nabla \uHt\loc(y) - \nabla \uHt\loc(x)\|_{L^2(B_{\rho}(x),dy)}
     \Big\|_{L^2(\Omega,dx)} 
\\
&
\qquad\qquad
\lesssim
\rho^{d/2} \| e\|_{L^2(\Omega)}
+
\rho^{(d+2s)/2} 
\|u\smooth\|_{H^{1+s}(\Omega)}.
\end{aligned}
\end{equation*}
Since $\uHt\loc$ is the finite element approximation to
$\nabla u\smooth$, standard a~priori error estimates for the
Galerkin projection yield
\begin{equation*}
\| e\|_{L^2(\Omega)}
\lesssim
H^s \|u\smooth\|_{H^{1+s}(\Omega)}.
\end{equation*}
Thus,
\begin{equation}\label{e:estIIstep}
     \Big\|
       \| \nabla \uHt\loc(y) - \nabla \uHt\loc(x)\|_{L^2(B_{\rho}(x),dy)}
     \Big\|_{L^2(\Omega,dx)} 
\\
\lesssim
\rho^{(d+2s)/2} 
\|u\smooth\|_{H^{1+s}(\Omega)}.
\end{equation}
If $u\smooth$ belongs to $H^2(\Omega)$, then
the results of \cite{Grisvard1985,Dauge1988,Melenk2002}
lead to 
\begin{equation}\label{e:W2qstab}
\begin{aligned}
\|u\smooth\|_{H^2(\Omega)}
&\lesssim 
\|A_H\smooth\|_{W^{1,\infty}(\Omega)}
 (\| f \|_{L^2(\Omega)} + \|u\smooth\|_{H^1(\Omega)})
\\
&\lesssim
\|A_H\smooth\|_{W^{1,\infty}(\Omega)}
\| f \|_{L^2(\Omega)} .
\end{aligned}
\end{equation}
The assertion in $H^{1+s}(\Omega)$ can be proved with
an operator interpolation argument.
Indeed,
as shown in \cite{Grisvard1985}, the 
operator $-\ddiv (A\smooth\nabla \cdot)$ maps
$H^2(\Omega)\cap H^1_0(\Omega)$ to a closed subspace 
$Y_1$ of $L^2(\Omega)$.
Let $T$ denote the solution operator, which maps
$Y_1$ to $X_1:=H^2(\Omega)$ 
and furthermore maps
$Y_0:=L^2(\Omega)$ to
$X_0:=H^1(\Omega)$.
The real method of Banach space interpolation
\cite{BerghLoefstroem1976}
shows that
$H^{1+s}(\Omega) = [X_0,X_1]_{s,2}$,
which together with the $H^1$ stability of the problem and
\eqref{e:W2qstab} proves
$$
\|u\smooth\|_{H^{1+s}(\Omega)}
\lesssim
\|A_H\smooth\|_{W^{1,\infty}(\Omega)}^s
\| f \|_{L^2(\Omega)} .
$$
The combination with Lemma~\ref{l:smoothed}
proves
\begin{equation*}
\|u\smooth\|_{H^{1+s}(\Omega)}
\lesssim
 \big(1+\eta(A_H\loc)\big)^s \;
 \| f \|_{L^2(\Omega)}.
\end{equation*}
The combination with Proposition~\ref{p:posteriori}
and \eqref{e:estIIstep} proves
\begin{equation*}
\begin{aligned}
\|\nabla( u_H\loc - \uHt\loc)\|_{L^2(\Omega)}
&
\lesssim 
H^{-d/2}
\rho^{(d+2s)/2} 
\|u\smooth\|_{H^{1+s}(\Omega)}
\\
&
\lesssim
H^s
\lvert \log H\rvert^{s+d/2} 
 \big(1+\eta(A_H\loc)\big)^s \;
 \| f \|_{L^2(\Omega)}
.
\end{aligned}
\end{equation*}
This implies the assertion.
\end{proof}

\begin{remark}[homogenization indicator]\label{r:homcrit}
If the relations
\begin{equation*}
H^{-1}\| [A_H\loc] \|_{L^\infty(\mathcal F_H)} \lesssim 1
\quad\text{and}\quad
\alpha_H^{-1} H
\lesssim 1
\end{equation*}
are satisfied, then the multiplicative constant
in Proposition~\ref{p:priori} is of moderate size.
Hence, we interpret $\eta(A_H\loc)$ as a homogenization
indicator and the above relations
as a \emph{homogenization criterion}.
\end{remark}

\begin{remark}[local mesh-refinement]
We furthermore remark that local versions of 
$\eta(A_H\loc)$
involving the jump information
$H^{-1}\| [A_H\loc] \|_{L^\infty(F)}$ for interior interfaces
$F$ may be used as refinement indicators for local mesh-adaptation.
This possibility shall, however, not be further discussed here.
\end{remark}

\begin{remark}[global homogenized coefficient]
If the global variations of $A_H\loc$ are small in the sense that
there are positive constants $c_1$, $c_2$ such that,
almost everywhere,
$$
 c_1 |\xi|^2 
 \leq \xi\cdot (A_H\loc \xi)
  \leq c_2 |\xi|^2 \quad\text{for any } \xi\in \mathbb R^d
$$
holds with $|c_2-c_1|\lesssim H$,
then $A_H\loc$ can be replaced by $\fint_\Omega A_H\loc\,dx$ 
without effecting the accuracy.
\end{remark}

The combination of Proposition~\ref{p:priori}
with \eqref{e:msHfestimate} leads to the following a~priori
error estimate. The parameter $s$ therein is determined by the
elliptic regularity of the model problem with a $W^{1,\infty}(\Omega)$
coefficient.

\begin{theorem}\label{t:rate}
Let $\ell\approx\lvert\log H\rvert$ and assume
that \eqref{e:AHbounds} is satisfied.
Let $u$ solve \eqref{e:modelweak}
and let $\uHt\loc$ solve \eqref{e:mslump}.
Assume furthermore that the solution $u\smooth$ to
\eqref{e:PDEsmooth} belongs to $H^{1+s}(\Omega)$
for some $0<s\leq 1$.
Then,
\begin{equation*}
\| u - \uHt\loc\|_{L^2(\Omega)}
\lesssim
\Big(
H + 
H^s
\lvert \log H\rvert^{s+d/2}
\,\big(1+\eta(A_H\loc)\big)^s
\Big)
 \| f \|_{L^2(\Omega)}.
\end{equation*}
In particular, under the homogenization criterion from
Remark~\ref{r:homcrit}, a convergence rate is achieved.
If the domain is convex, then $s$ can be chosen as $s=1$,
i.e., the convergence
rate is linear up to a logarithmic factor.
\end{theorem}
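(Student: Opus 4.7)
The plan is to combine the two estimates that have already been established, namely the a~priori bound \eqref{e:msHfestimate} between the continuous solution $u$ and the quasi-local Petrov--Galerkin approximation $u_H\loc$, with the a~priori bound of Proposition~\ref{p:priori} between $u_H\loc$ and the fully localized approximation $\uHt\loc$. A triangle inequality
\[
\| u - \uHt\loc \|_{L^2(\Omega)}
\leq \| u - u_H\loc \|_{L^2(\Omega)}
  + \| u_H\loc - \uHt\loc \|_{L^2(\Omega)}
\]
splits the error into exactly these two pieces, and neither piece requires any new ingredient beyond what has already been proved.

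For the first summand I would simply quote \eqref{e:msHfestimate}, which gives $(H^2+\wcba(A,\T_H))\|f\|_{L^2(\Omega)}$; since $\wcba(A,\T_H)\lesssim H$ is always available from standard interpolation estimates, this term is absorbed in the $H\|f\|_{L^2(\Omega)}$ contribution on the right-hand side of the theorem. Observe that this rough bound is the origin of the linear $H$-term that appears additively in the final estimate and which cannot be improved without additional structural assumptions on $A$.

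For the second summand I would invoke Proposition~\ref{p:priori}, which yields the $H^1$-seminorm bound
\[
\|\nabla(u_H\loc-\uHt\loc)\|_{L^2(\Omega)}
\lesssim H^s \lvert\log H\rvert^{s+d/2}\bigl(1+\eta(A_H\loc)\bigr)^s\|f\|_{L^2(\Omega)}.
\]
A Friedrichs (Poincar\'e) inequality on $\Omega$, using that $u_H\loc-\uHt\loc\in V_H\subseteq H^1_0(\Omega)$, upgrades this bound to the $L^2$ norm at the cost of the diameter of $\Omega$, which is absorbed into the implicit constant. Summing the two contributions produces the displayed inequality of the theorem.

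The final assertion concerning the convex case reduces to elliptic regularity for the auxiliary problem \eqref{e:PDEsmooth}: by Lemma~\ref{l:smoothed} the coefficient $A_H\smooth$ is Lipschitz, and on a convex Lipschitz polytope the shift theorem guarantees $u\smooth\in H^2(\Omega)$ with the $W^{1,\infty}$-dependent stability bound \eqref{e:W2qstab} used in the proof of Proposition~\ref{p:priori}. Hence $s=1$ is admissible and we recover the linear-up-to-logarithm rate. The only real subtlety in the whole argument is the implicit dependence of the constants in both estimates on the contrast bounds of $A$ and $A_H\loc$, which is already tracked through $\eta(A_H\loc)$ and the assumption \eqref{e:AHbounds}; I expect no further obstacle.
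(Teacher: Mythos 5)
Your argument is correct and coincides with the paper's own proof: the triangle inequality splits the error into the piece controlled by \eqref{e:msHfestimate} (with $\wcba(A,\T_H)\lesssim H$) and the piece controlled by Proposition~\ref{p:priori} upgraded to $L^2$ via the Friedrichs inequality, with the convex case handled by elliptic regularity for the Lipschitz coefficient $A_H\smooth$ giving $s=1$. No gaps.
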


\begin{proof}
This follows from combining
Proposition~\ref{p:priori} with \eqref{e:msHfestimate},
the triangle inequality and the
Friedrichs inequality.
If the domain is convex, elliptic regularity theory 
\cite{Grisvard1985,Dauge1988,Melenk2002} shows that
$s=1$ is an admissible choice.
\end{proof}

\begin{remark}\label{r:eta}
We emphasize that $\eta(A_H\loc)$ is not an error estimator for
the discretization error. It rather indicates whether the local
discrete model is appropriate. If $\eta(A_H\loc)$ is close
to zero, then the multiplicative constant on the right-hand side of the
formula in Theorem~\ref{t:rate} is of reasonable magnitude.
\end{remark}

\section{The periodic setting}\label{s:periodic}

In this section we justify the use of the local effective
coefficient $A_H$
in the periodic setting.
We show that the procedure 
in its idealized form with $\ell=\infty$ recovers the
classical periodic homogenization limit.
We denote by
$V:=H^1_{\#}(\Omega)/\mathbb R$ the space of periodic $H^1$ functions with
vanishing integral mean over $\Omega$.
We assume $\Omega$ to be a polytope allowing for 
periodic boundary conditions.
We adopt the notation of Section~\ref{s:nonloc},
in particular $W\subseteq V$ is the kernel of the quasi-interpolation
$I_H$,
$V_H$ is the space of piecewise affine globally continuous
functions of $V$,
and
$\Cor\loc$, $a$, $\tilde a\loc$, $\mathfrak a\loc$, $\mathcal A_H\loc$,
$A_H\loc$, $\mathcal K\loc$
are defined as in 
Section~\ref{s:nonloc} with the underlying space $V$ being
$H^1_{\#}(\Omega)/\mathbb R$.
We assume that the domain $\Omega$ matches with integer
multiples of the period.
We assume the triangulation $\T_H$ to match with the periodicity
pattern. For simplicial partitions this implies further symmetry
assumptions. In particular, periodicity with respect to a uniform
rectangular grid is not sufficient. Instead we require further
symmetry within the triangulated macro-cells,
see Example~\ref{ex:periodic} for an illustration.
This property will be required in the proof of
Propositon~\ref{p:homlimit} below.
In particular, not every periodic coefficient may meet this
requierement. Also, generating such a triangulation requires knowledge
about the length of the period.

\begin{example}\label{ex:periodic}
Figure~\ref{f:periodicity} displays a periodic coefficient
and a matching triangulation.\qed
\begin{figure}[t]
\centering
  \begin{tikzpicture}[scale=3.5]
   \draw[ultra thick,fill=gray!30] (0,0) -- (1,0) -- (1,1) -- (0,1) --cycle;
   \draw (0,.5)--(1,.5)
         (0,0)--(0,1)
         (.5,0)--(.5,1);
   \draw (0,.5)--(.5,1)
         (0,0)--(1,1)
         (.5,0)--(1,.5);
   \foreach \x/\y  in { .35/.1,
                        .85/.1,
                        .35/.6,
                        .85/.6
                                }
      {  \fill[color=blue] (\x,\y) circle (1pt);}
  \end{tikzpicture}
\hspace{2ex}
   \begin{tikzpicture}[scale=3.5]
   \draw[ultra thick,fill=gray!30] (0,0) -- (1,0) -- (1,1) -- (0,1) --cycle;
   \draw (0,.5)--(1,.5)
         (0,0)--(0,1)
         (.5,0)--(.5,1);
   \draw (0,.5)--(.5,1)
         (0,0)--(1,1)
         (.5,0)--(1,.5);
   \foreach \x/\y  in { .35/.1,
                        .85/.1,
                        .35/.6,
                        .85/.6,
                        .65/.4,
                        .15/.4,
                         .65/.9,
                        .15/.9 }
      {  \fill[color=blue] (\x,\y) circle (1pt);}
  \end{tikzpicture}
\caption{Periodic coefficients with respect to a square grid and
           triangulations: non-matching (left) and matching (right).
       \label{f:periodicity}}
\end{figure}
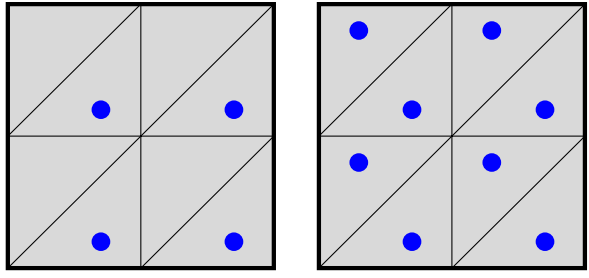
\end{example}

We remark that the error estimate
\eqref{e:mspgl2est} and Proposition~\ref{p:priori} hold
in this case as well.
Due to the periodic boundary conditions, the auxiliary solution 
$u\smooth$ utilized in the proof of Proposition~\ref{p:priori}
has the smoothness $u\smooth\in H^2(\Omega)$
so that those estimates are valid with $s=1$.
In the periodic setting, further properties of 
$A_H\loc$ can be
derived. 
First, it is not difficult to prove that the coefficient $A_H\loc$
is globally constant. The following result states that,
in the idealized case $\ell=\infty$, the coefficient $A_H\loc$
is even independent of the mesh-size $H$ and coincides with the 
classical homogenization limit, where for any $j=1,\dots,d$,
the corrector $\hat q_j\in H^1_\#(\Omega)/\mathbb R$ is the solution to
\begin{equation}\label{e:classhom}
\ddiv A(\nabla \hat q_j-e_j) =0
\text{ in }\Omega\text{ with periodic boundary conditions}.
\end{equation}

\begin{proposition}\label{p:homlimit}
Let $A$ be periodic and let $\T_H$ be
uniform and aligned with the periodicity
pattern of $A$
and let $V$, $W$ be spaces with periodic boundary conditions.
Then, for any $T\in\T_H$, the idealized coefficient
$A_H^{(\infty)}|_T$
coincides with the homogenized coefficient from the classical
homogenization theory.
In particular, $A_H^{(\infty)}$
is globally constant and independent of $H$.
\end{proposition}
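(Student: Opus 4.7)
The plan is to combine a symmetry argument with the aggregation of all element correctors in order to reduce the computation of $A_H^{(\infty)}$ to the classical cell problem. In the first step, I would exploit the matching hypothesis to establish global constancy of $A_H^{(\infty)}$: the group of mesh-preserving isometries acts transitively on $\T_H$ and simultaneously preserves $A$, so changing variables in \eqref{e:qTelementcorr} shows that the element correctors $q_{T,k}$ transform covariantly; the additional reflection symmetries assumed within every macro-cell then force the tensor $A_H^{(\infty)}|_T$ to take a common value $\bar A$ on every $T \in \T_H$.

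In the second step, I derive a consistency identity by direct calculation from Section~\ref{s:nonloc}: for any $v_H \in V_H$ and any constant vector $\phi \in \mathbb R^d$,
\begin{equation*}
\int_\Omega \phi \cdot A\nabla(1-\Cor)v_H\,dx
=
\int_\Omega \phi \cdot A_H^{(\infty)}\nabla v_H\,dx.
\end{equation*}
One expands $\Cor v_H$ via \eqref{e:CorvHexpansion}, applies the definition of $\mathcal K_{T,K}^{(\infty)}$ to evaluate the integrals $\int_K A\nabla q_{T,k}$, and recognizes $\fint_T A - \sum_K |K|\mathcal K_{T,K}^{(\infty)}$ as $A_H^{(\infty)}|_T$.

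In the third step, I form the aggregate corrector $p_k := \sum_{T\in\T_H} q_{T,k} \in W$. Summing \eqref{e:qTelementcorr} shows $a(w,p_k) = \int_\Omega \nabla w \cdot Ae_k\,dx$ for every $w \in W$. Let $\tilde q_k \in V$ denote the periodic extension to $\Omega$ of the classical corrector $\hat q_k$ from \eqref{e:classhom}; it solves the same identity but for all $v \in V$. The $a$-orthogonal splitting $V = (1-\Cor)V_H \oplus_a W$, which follows from \eqref{e:cororth} together with $I_H$ being a surjective projection, then yields
\begin{equation*}
p_k = \tilde q_k - (1-\Cor) I_H \tilde q_k,
\end{equation*}
because both sides lie in $W$ and induce the same functional on $W$ via $a(\cdot,\cdot)$. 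Summing Remark~\ref{r:homo1} in the form $|T|(A_H^{(\infty)})_{j,k}|_T = \int_\Omega e_j \cdot A(\chi_T e_k - \nabla q_{T,k})\,dx$ over $T$ and invoking the constancy from step one yields
\begin{equation*}
|\Omega|\,\bar A_{j,k} = \int_\Omega e_j \cdot A(e_k - \nabla p_k)\,dx.
\end{equation*}

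Finally, I would substitute $p_k = \tilde q_k - (1-\Cor)I_H\tilde q_k$ into the last display. The term with $\tilde q_k$ contributes $|\Omega|\,A^0_{j,k}$ by periodicity of $A(e_k-\nabla\hat q_k)$ and the standard representation of $A^0$. The other term equals $\int_\Omega e_j \cdot A\nabla(1-\Cor)I_H\tilde q_k\,dx$, which by the consistency identity equals $e_j \cdot \bar A\int_\Omega \nabla I_H\tilde q_k\,dx$; since $I_H\tilde q_k$ inherits $Y$-periodicity (the averaging defining $I_H$ commutes with periodic translations), its gradient has vanishing integral over $\Omega$, so this term is zero. Division by $|\Omega|$ then gives $\bar A = A^0$. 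The technically most subtle step is the first one: the matching hypothesis must be strong enough that the orbit of mesh-preserving isometries actually collapses $A_H^{(\infty)}|_T$ to a single tensor rather than merely a conjugacy class, and the setup of Example~\ref{ex:periodic} is precisely designed so that both the required translations and in-cell reflections are present.
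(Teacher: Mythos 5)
Your argument is correct in substance but takes a genuinely different route from the paper's. The paper also forms the aggregated corrector $q_k=\sum_{T\in\T_H}q_{T,k}$ solving \eqref{e:gequation_multid}, but its key step is to show that $q_k$ solves the classical cell problem \eqref{e:classhom} \emph{exactly}: for $v\in V$ one reduces to $\nabla I_H v$, invokes the matching assumption to conclude that $\fint_K A(\nabla q_j-e_j)\,dx$ is the same on every element $K$, and uses $\int_\Omega\nabla I_H v\,dx=0$ from the periodic boundary conditions; the identity $(A_H^{(\infty)}|_T)_{jk}=\fint_T(e_j-\nabla q_j)\cdot Ae_k\,dx$ then delivers both the identification with the homogenized coefficient and the global constancy as a by-product. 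You instead take constancy of $A_H^{(\infty)}$ as an \emph{input} (via the symmetry group of the matching mesh) and never need your $p_k$ to coincide with the classical corrector: the decomposition $p_k=\tilde q_k-(1-\Cor)I_H\tilde q_k$, which follows cleanly from \eqref{e:cororth} and the projection property of $I_H$ and holds without any periodicity, isolates the discrepancy as a coarse term whose contribution you then cancel with your (correct) consistency identity for constant test vectors together with $\int_\Omega\nabla I_H\tilde q_k\,dx=0$. What each approach buys: the paper concentrates the symmetry input in a single flux-average statement and gets constancy for free, while your version separates the general LOD algebra (your steps 2--3, valid for arbitrary coefficients) from the periodic input, which is more modular but makes the constancy step load-bearing. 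On that step, your conjugacy worry is resolvable but should be argued via the right symmetry: the in-cell symmetry built into Example~\ref{ex:periodic} is a \emph{point} symmetry whose linear part is $-I$, so the transformation rule $A_H^{(\infty)}|_{\Phi(T)}=R\,A_H^{(\infty)}|_T R^{\top}$ with $R=-I$ gives genuine equality, and translations by the period handle distinct macro-cells; a true reflection would indeed only yield a conjugate. This is the same elementary-but-unproven symmetry fact the paper itself invokes when asserting $\fint_K A(\nabla q_j-e_j)\,dx=\fint_\Omega A(\nabla q_j-e_j)\,dx$ for all $K\in\T_H$, so with that clarification your proof is complete at the paper's own level of detail.
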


\begin{proof}
Let $T\in\T_H$ and $j,k\in\{1,\dots,d\}$.
The definitions of $A_H^{(\infty)}|_T$
and $\mathcal K^{(\infty)}$ lead to
\begin{equation}\label{e:per_identity1}
\begin{aligned}
\fint_T A_{jk} \,dx- (A_H^{(\infty)}|_T)_{jk}
&
= |T|^{-1}\sum_{K\in\T_H} \int_K e_j\cdot (A\nabla q_{T,k}) \,dx
\\
&
= |T|^{-1} \int_\Omega e_j \cdot ( A\nabla q_{T,k})\,dx.
\end{aligned}
\end{equation}
The sum over all element correctors defined by
$q_k:=\sum_{T\in\T_H} q_{T,k}$ solves
\begin{equation}\label{e:gequation_multid}
 a(w,q_k) = (\nabla w, A e_k)_{L^2(\Omega)}
\quad\text{for all } w\in W.
\end{equation}
The definitions of $q_{T,k}$ and $q_k$ and the symmetry of $A$ lead to
\begin{equation}\label{e:per_identity2}
\begin{aligned}
|T|^{-1} \int_\Omega e_j \cdot ( A\nabla q_{T,k})\,dx 
&
= |T|^{-1} \int_\Omega \nabla q_j \cdot ( A\nabla q_{T,k}) \,dx
\\
&
= \fint_T e_k\cdot (A\nabla q_j)\,dx.
\end{aligned}
\end{equation}
Let $v\in V$. We have $(v-I_H v)\in W$ and therefore
by \eqref{e:gequation_multid} that
\begin{equation*}
\begin{aligned}
  \int_\Omega \nabla v \cdot ( A(\nabla q_j -e_j) )\,dx
&= \int_\Omega (\nabla I_H v) \cdot ( A(\nabla q_j -e_j) )\,dx
\\
&= \sum_{K\in\T_H} 
   \int_K (\nabla I_H v)\,dx \cdot \fint_K A(\nabla q_j -e_j)\,dx 
\end{aligned}
\end{equation*}
where for the last identity it was used that $\nabla I_H v$ is
constant on each element.
By periodicity we have that
$\fint_K A(\nabla q_j -e_j)\,dx =\fint_\Omega A(\nabla q_j -e_j)\,dx$
for any $K\in\T_H$.
Therefore, for all $v\in V$,
\begin{equation*}
\begin{aligned}
  \int_\Omega \nabla v \cdot (A (\nabla q_j -e_j))\,dx
&= \int_\Omega (\nabla I_H v)\,dx \cdot 
       \fint_\Omega A(\nabla q_j -e_j)\,dx =0
\end{aligned}
\end{equation*}
due to the periodic boundary conditions of $I_H v$.
Hence, the difference $\nabla q_j-e_j$
satisfies \eqref{e:classhom}.
This is the corrector problem from classical homogenization 
theory and, thus, the proof is concluded by the above formulae
\eqref{e:per_identity1}--\eqref{e:per_identity2}.
Indeed, by symmetry of $A$,
\begin{equation*}
(A_H^{(\infty)}|_T)_{jk}
=
\fint_T A_{jk} \,dx- \fint_T e_k \cdot (A\nabla q_j)\,dx 
=
\fint_T (e_j-\nabla q_j)\cdot A e_k \,dx.
\end{equation*}

\end{proof}

\begin{remark}
For Dirichlet boundary conditions, the method is different from
the classical periodic homogenization as it takes the boundary
conditions into account.
\end{remark}

The remaining parts of this section are devoted to an 
$L^2$ error estimate for the classical homogenization limit.
Let the coefficient $A=A_\varepsilon$ be periodic, oscillating
on the scale $\varepsilon$.
Let $H$ be the observation
scale represented by the mesh-size of the finite element mesh.
We couple $H$ to $\varepsilon$ so that the ratio $H/\varepsilon$
is constant.
Recall from Proposition~\ref{p:homlimit} that
the idealized coefficient $A_H^{(\infty)}=A_0$ for a constant coefficient
$A_0$ that is independent of $H$.
It is known (see, e.g., \cite{Allaire1997}) that,
in the present case of a symmetric coefficient, $A_0$ 
satisfies the bounds \eqref{e:AHbounds}.
Denote, for any $\varepsilon$, by $u_\varepsilon\in V$
the solution to
\begin{equation}\label{e:ueps}
\int_\Omega\nabla {u_\varepsilon}\cdot ( A_\varepsilon \nabla v) \,dx =
 F(v)\quad\text{for all } v\in V.
\end{equation}
Denote by $u_0\in V$ the solution to
\begin{equation}\label{e:u0}
\int_\Omega \nabla u_0 \cdot (A_0\nabla v)\,dx = F(v) 
 \quad\text{for all }v\in V.
\end{equation}
In periodic homogenization theory,
the function $u_0$ is called the homogenized solution.
The aim is to estimate $\|u_0-u_\varepsilon\|_{L^2(\Omega)}$
in terms of $\varepsilon$.
The following perturbation result is required.
\begin{lemma}[perturbed coefficient]\label{l:perturb}
 Let $H$ and $\varepsilon$ be coupled so that
 $H/\varepsilon$ is constant.
 Let the localization parameter $\ell$ be chosen of order
 $\ell\approx \lvert\log H\rvert$. Then,
 \begin{equation*}
  \| A_H^{(\infty)} - A_H\loc\|_{L^\infty(\Omega)} \lesssim H.
 \end{equation*}
 There exist $\varepsilon_0>0$ and $0<\alpha'\leq\beta'<\infty$
 such that for all
 $\varepsilon\leq\varepsilon_0$
\begin{equation*}
\alpha'|\xi|^2
\leq \xi \cdot ( A_H\loc(x)\xi)
\leq \beta' |\xi|^2
\end{equation*}
for all $\xi\in\mathbb{R}^d$ and almost all $x\in\Omega$. 
\end{lemma}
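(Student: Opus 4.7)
The plan is to derive the uniform bound via the exponential decay of the corrector localization error, and then to obtain the spectral bounds by perturbing around the classical homogenized coefficient $A_0 = A_H^{(\infty)}$ identified in Proposition~\ref{p:homlimit}.

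First, I would establish the quasi-locality estimate
\[
\|\nabla(q_{T,k} - q_{T,k}\loc)\|_{L^2(\Omega)} \lesssim \exp(-c\ell)\,|T|^{1/2}
\]
for every $T \in \T_H$ and $k \in \{1,\dots,d\}$, following the cutoff/Caccioppoli argument of \cite{MalqvistPeterseim2014,HenningPeterseim2013} that also underlies \eqref{e:expdecay}: both correctors solve problems with right-hand side supported in $T$, but $q_{T,k}\loc$ is additionally constrained to vanish outside $\Omega_T$; cutting off $q_{T,k}$ with a suitable corrected cutoff of the form $(1-I_H)(\eta\,q_{T,k})$ adapted to the kernel $W = \ker I_H$ transfers the exponential decay of $q_{T,k}$ to the difference.

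Next, the definition of $\mathcal K\loc$ gives
\[
\sum_{K \in \T_H} |K|\,(\mathcal K_{T,K}\loc)_{jk}
= \frac{1}{|T|}\,e_j \cdot \int_\Omega A\,\nabla q_{T,k}\loc\,dx,
\]
with the analogous identity for $\ell = \infty$. Subtracting yields
\[
(A_H^{(\infty)} - A_H\loc)_{jk}\big|_T
= \frac{1}{|T|}\,e_j \cdot \int_\Omega A\,\nabla(q_{T,k}\loc - q_{T,k})\,dx,
\]
and Cauchy--Schwarz together with $\|A\|_{L^\infty}\leq\beta$, $|T|\gtrsim H^d$, a fixed macroscopic $|\Omega|$, and the first step produces
\[
\big|(A_H^{(\infty)} - A_H\loc)_{jk}|_T\big| \lesssim \exp(-c\ell)\,H^{-d/2}.
\]
Choosing the constant in $\ell \approx |\log H|$ large enough so that $\exp(-c\ell) \leq H^{1+d/2}$ yields the claimed bound $\|A_H^{(\infty)} - A_H\loc\|_{L^\infty(\Omega)} \lesssim H$.

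For the spectral bounds, Proposition~\ref{p:homlimit} identifies $A_H^{(\infty)}$ with the classical periodic homogenized coefficient $A_0$, which is uniformly elliptic with bounds $0 < \alpha_0 \leq \beta_0 < \infty$ depending only on $A$ (see, e.g., \cite{Allaire1997}). Since $H$ and $\varepsilon$ are coupled linearly, the first part of the lemma gives $\|A_0 - A_H\loc\|_{L^\infty(\Omega)} \lesssim \varepsilon$, so for $\varepsilon \leq \varepsilon_0$ with $\varepsilon_0$ sufficiently small a Rayleigh-quotient argument places the eigenvalues of $\sym(A_H\loc(x))$ in $[\alpha_0/2, 2\beta_0]$ almost everywhere, giving $\alpha' := \alpha_0/2$ and $\beta' := 2\beta_0$. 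The main obstacle is the first step: extracting the exponential-in-$\ell$ bound on the corrector difference in the present quasi-interpolation framework, since the cutoff argument must be compatible with the kernel $W$. This is standard in the LOD literature but is the technical heart of the analysis; once it is in place the remaining manipulations are routine.
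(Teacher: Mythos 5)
Your proposal is correct and follows essentially the same route as the paper: express $(A_H^{(\infty)}-A_H\loc)|_T$ through $\int_\Omega e_j\cdot A\nabla(q_{T,k}-q_{T,k}\loc)\,dx$, invoke the exponential-in-$\ell$ bound $\|A^{1/2}\nabla(q_{T,k}-q_{T,k}\loc)\|_{L^2(\Omega)}\lesssim \exp(-c\ell)|T|^{1/2}$ (which the paper simply cites from the proof of Cor.~4.11 in \cite{HenningPeterseim2013} rather than re-deriving via a cutoff argument), choose the constant in $\ell\approx|\log H|$ large enough to absorb the $H^{-d/2}$ factor, and obtain the spectral bounds by perturbing around $A_0=A_H^{(\infty)}$ using \cite{Allaire1997}. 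The only cosmetic difference is that you work from the definition of $\mathcal K\loc$ while the paper uses the equivalent representation of Remark~\ref{r:homo1}.
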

\begin{proof}
Remark~\ref{r:homo1} shows that $A_H^{(\infty)}$
and $A_H\loc$ are given on any $T\in\T_H$ through
\begin{equation*}
\begin{aligned}
(A_H^{(\infty)})_{j,k}|_T
& =
|T|^{-1} \int_{\Omega} e_j\cdot
                       \big(   A (\chi_T e_k-\nabla q_{T,k}) \big)
\\
\text{and }
\qquad
(A_H\loc)_{j,k}|_T
& =
|T|^{-1} \int_{\Omega_T} e_j\cdot
                       \big(   A (\chi_T e_k-\nabla q_{T,k}\loc) \big)
\end{aligned}
\end{equation*}
for any $j,k\in\{1,\dots,d\}$.
Thus,
\begin{equation*}
\begin{aligned}
\lvert (A_H^{(\infty)})_{j,k}|_T - (A_H\loc)_{j,k}|_T \rvert
& =
|T|^{-1} \int_{\Omega} e_j\cdot
                       \big(   A (\nabla (q_{T,k}-q_{T,k}\loc)) \big)
\\
&
\leq
|T|^{-1} \|A^{1/2}e_j\|_{L^2(\Omega)}
         \|A^{1/2} \nabla (q_{T,k}-q_{T,k}\loc)\|_{L^2(\Omega)} 
\\
&
\leq
|T|^{-1} \|A^{1/2} \nabla (q_{T,k}-q_{T,k}\loc)\|_{L^2(\Omega)}  .
\end{aligned}
\end{equation*}
It is shown in \cite[proof of Cor.~4.11]{HenningPeterseim2013} that
\begin{equation*}
\|A^{1/2} \nabla (q_{T,k}-q_{T,k}\loc)\|_{L^2(\Omega)}
\lesssim 
\exp(-c\ell) |T|^{1/2} .
\end{equation*}
In conclusion, the choice 
$ \ell \approx \lvert \log H \rvert $
implies the first stated estimate.
The second stated result follows from a perturbation argument 
because it is known \cite{Allaire1997} that
$A_H^{(\infty)}=A_0$ satisfies \eqref{e:Abounds}.
\end{proof}
The following result recovers the classical homogenization limit
$u_\varepsilon\to u_0$
strongly in $L^2$ as $\varepsilon\to 0$.
In particular, it quantifies the convergence speed and
states that for 
$f\in L^2(\Omega)$ an almost linear
rate is achieved.

\begin{proposition}[quantified homogenization limit]
Let $\Omega$ be convex,
let $u_\varepsilon\in V$ solve \eqref{e:ueps} and let
$u_0\in V$ solve \eqref{e:u0}.
For any $\varepsilon\leq\varepsilon_0$ 
(for  $\varepsilon_0$ from Lemma~\ref{l:perturb})
we have
\begin{equation*}
\|u_\varepsilon-u_0\|_{L^2(\Omega)}
\lesssim
\varepsilon
  \lvert\log \varepsilon\rvert^{1+d/2}
\|f\|_{L^2(\Omega)}.
\end{equation*}
\end{proposition}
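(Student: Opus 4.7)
The plan is to derive this limit as a consequence of Theorem~\ref{t:rate} applied to the problem with $A = A_\varepsilon$, combined with a perturbation argument that compares $\uHt\loc$ with the Galerkin approximation of $u_0$ using the exact constant homogenized coefficient $A_0$. Concretely, introduce as an auxiliary object $\hat u_H \in V_H$ with
\begin{equation*}
\int_\Omega \nabla \hat u_H \cdot (A_0 \nabla v_H)\,dx = F(v_H)\quad\text{for all } v_H\in V_H,
\end{equation*}
and split
\begin{equation*}
\|u_\varepsilon - u_0\|_{L^2(\Omega)}
\leq
\|u_\varepsilon - \uHt\loc\|_{L^2(\Omega)}
+ \|\uHt\loc - \hat u_H\|_{L^2(\Omega)}
+ \|\hat u_H - u_0\|_{L^2(\Omega)}.
\end{equation*}

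First I would check that the homogenization criterion of Remark~\ref{r:homcrit} is satisfied by $A_H\loc$ in this periodic setting. By Proposition~\ref{p:homlimit} the idealized coefficient $A_H^{(\infty)}$ equals the constant matrix $A_0$, and Lemma~\ref{l:perturb} gives $\|A_0 - A_H\loc\|_{L^\infty(\Omega)} \lesssim H$ together with uniform spectral bounds $\alpha', \beta'$ on $A_H\loc$. In particular, since $A_0$ is constant, $[A_0]_F = 0$ on every $F\in\mathcal F_H$, so
\begin{equation*}
H^{-1}\|[A_H\loc]\|_{L^\infty(\mathcal F_H)}
\leq 2H^{-1}\|A_0 - A_H\loc\|_{L^\infty(\Omega)} \lesssim 1,
\end{equation*}
and $\alpha_H^{-1} H \leq (\alpha')^{-1} H \lesssim 1$. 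Hence $\eta(A_H\loc)$ is uniformly bounded. Since $\Omega$ is convex and $A_\varepsilon$ is merely periodic, Theorem~\ref{t:rate} with $s=1$ gives
\begin{equation*}
\|u_\varepsilon - \uHt\loc\|_{L^2(\Omega)}
\lesssim H |\log H|^{1+d/2}\, \|f\|_{L^2(\Omega)}.
\end{equation*}

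Next I would estimate the two remaining terms by standard arguments. For $\|u_0 - \hat u_H\|_{L^2(\Omega)}$, since $A_0$ is constant and $\Omega$ is convex, the Aubin--Nitsche duality argument yields $\|u_0 - \hat u_H\|_{L^2(\Omega)} \lesssim H^2 \|f\|_{L^2(\Omega)}$. For $\|\uHt\loc - \hat u_H\|_{L^2(\Omega)}$, I would apply a Strang-type coefficient perturbation. Subtracting the two discrete equations gives, for all $v_H\in V_H$,
\begin{equation*}
\int_\Omega \nabla(\uHt\loc - \hat u_H)\cdot (A_H\loc \nabla v_H)\,dx
= \int_\Omega (A_0 - A_H\loc)\nabla \hat u_H\cdot \nabla v_H\,dx,
\end{equation*}
so that the uniform coercivity of $A_H\loc$ from Lemma~\ref{l:perturb} together with $\|A_0 - A_H\loc\|_{L^\infty(\Omega)}\lesssim H$ and the $H^1$-stability $\|\nabla \hat u_H\|_{L^2(\Omega)}\lesssim \|f\|_{L^2(\Omega)}$ yields
\begin{equation*}
\|\nabla(\uHt\loc - \hat u_H)\|_{L^2(\Omega)} \lesssim H\,\|f\|_{L^2(\Omega)},
\end{equation*}
and hence, by Friedrichs, the same bound in the $L^2$ norm.

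Finally, I would collect the three bounds, observe that the dominant term is $H|\log H|^{1+d/2}\|f\|_{L^2(\Omega)}$, and use the coupling $H \approx \varepsilon$ to replace $H$ by $\varepsilon$ and $|\log H|$ by $|\log\varepsilon|$ up to multiplicative constants, yielding the claim. The only potential subtlety is justifying that the homogenization indicator $\eta(A_H\loc)$ is genuinely $\mathcal O(1)$ uniformly in $\varepsilon$; this is the step where I expect the most care, but it is essentially already contained in Lemma~\ref{l:perturb} combined with the fact that $A_0$ is a single constant matrix so that its elementwise jumps vanish. The other two error contributions are standard finite element and Strang estimates and require no new ideas.
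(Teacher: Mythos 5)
Your proposal is correct and follows essentially the same route as the paper: the paper splits the error as $\|u_\varepsilon-u_H\loc\|+\|u_H\loc-\uHt\loc\|+\|\uHt\loc-\uHt^{(\infty)}\|+\|\uHt^{(\infty)}-u_0\|$, and your auxiliary $\hat u_H$ is exactly $\uHt^{(\infty)}$ (the Galerkin solution for $A_0=A_H^{(\infty)}$ by Proposition~\ref{p:homlimit}), your Strang-type step is the paper's "stability plus Lemma~\ref{l:perturb}" bound, and your first term merely bundles the paper's first two via Theorem~\ref{t:rate}. The only cosmetic difference is that the paper uses $\eta(A_H\loc)=0$ (since $A_H\loc$ is globally constant in the matched periodic setting), whereas you show $\eta(A_H\loc)\lesssim 1$ through $\|A_0-A_H\loc\|_{L^\infty}\lesssim H$, which is equally sufficient.
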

\begin{proof}
As before, we couple $H$ and $\varepsilon$ such that
$H/\varepsilon$ is constant.
We denote by 
$u_H\loc\in V_H$ the solution to \eqref{e:mspgELL},
by $\uHt\loc\in V_H$ the solution to \eqref{e:mslump},
and by $\uHt^{(\infty)}\in V_H$ the solution to \eqref{e:mslump}
with the choice $\ell=\infty$ where in all problems
$A$ is replaced by $A_\varepsilon$.
Note that Lemma~\ref{l:perturb} implies stability of the discrete
system \eqref{e:mslump} and thereby unique existence of
$\uHt\loc$.
We employ the triangle inequality to split the error as follows
\begin{equation}\label{e:per_split}
\begin{aligned}
\|u_\varepsilon-u_0\|_{L^2(\Omega)}
\lesssim
&
\|u_\varepsilon-u_H\loc\|_{L^2(\Omega)}
+
\|u_H\loc-\uHt\loc\|_{L^2(\Omega)}
\\
&
+
\|\uHt\loc-\uHt^{(\infty)}\|_{L^2(\Omega)}
+
\|\uHt^{(\infty)}-u_0\|_{L^2(\Omega)} .
\end{aligned}
\end{equation}
Estimate \eqref{e:mspgl2est} allows to bound the first term
on the right-hand side of \eqref{e:per_split} as
\begin{equation*}
\|u_\varepsilon-u_H\loc\|_{L^2(\Omega)}
\lesssim
\varepsilon \|f\|_{L^2(\Omega)}.
\end{equation*}
The second term on the right-hand side of \eqref{e:per_split}
was bounded in Proposition~\ref{p:priori}.
With the Friedrichs inequality the result reads
\begin{equation*}
\|u_H\loc - \uHt\loc\|_{L^2(\Omega)}
\lesssim
\varepsilon \lvert \log \varepsilon\rvert^{1+d/2}  \| f \|_{L^2(\Omega)}
\end{equation*}
where it was used that
$\eta(A_H\loc)=0$ because $A_H\loc$ is spatially constant.
In order to bound the third term on the right-hand side of
\eqref{e:per_split} we use the the stability of the discrete problems
and the perturbation result of Lemma~\ref{l:perturb} to deduce
\begin{equation*}
\|\uHt\loc-\uHt^{(\infty)}\|_{L^2(\Omega)}
\lesssim
\| A_H^{(\infty)} - A_H\loc\|_{L^\infty(\Omega)}\| f \|_{L^2(\Omega)}
\lesssim
\varepsilon \| f \|_{L^2(\Omega)}.
\end{equation*}
For the fourth term on the right-hand side of \eqref{e:per_split}
it is enough to note that $\uHt^{(\infty)}$ is the Galerkin
approximation of $u_0$ in $V_H$, which satisfies
\begin{equation*}
\|\uHt^{(\infty)}-u_0\|_{L^2(\Omega)}
\lesssim 
\varepsilon^2 \|f\|_{L^2(\Omega)}
\end{equation*}
on convex domains.
The combination of the foregoing estimates concludes the proof.
\end{proof}

\section{Numerical illustration}\label{s:num}

In section, we present numerical experiments
on the unit square domain $\Omega =(0,1)^2$
with homogeneous Dirichlet boundary conditions.
We consider the
following worst-case error (referred to as the $L^2$ error)
as error measure
$$
   \sup_{f\in L^2(\Omega)\setminus\{0\}}
   \frac{\| u(f) - u_{\mathrm{discrete}}(f) \|_{L^2(\Omega)}}
        {\|f\|_{L^2(\Omega)}}
$$
where $u(f)$ is the exact solution to \eqref{e:modelweak}
with right-hand side $f$
and $u_{\mathrm{discrete}}(f)$ a discrete approximation
(standard FEM or local effective coefficient or
quasi-local effective coefficient or
$L^2$-best approximation).
The error quantity is approximated by solving an eigenvalue
problem on the reference mesh.

\subsection{First experiment: Convergence rates}

Consider the scalar coefficient $A$ 
$$
A
(x_1,x_2)=
\left(
\frac{11}{2}
+\sin\left(\frac{2\pi}{\varepsilon_1}x_1\right) 
        \sin\left(\frac{2\pi}{\varepsilon_1} x_2\right)
+4\,\sin\left(\frac{2\pi}{\varepsilon_2} x_1\right)
        \sin\left(\frac{2\pi}{\varepsilon_2} x_2\right)
\right)^{-1}
$$
with $\varepsilon_1=2^{-3}$ and $\varepsilon_2=2^{-5}$.
We consider a sequence of uniformly refined meshes of mesh
size
$H=\sqrt{2}\times 2^{-1},\dots,\sqrt{2}\times 2^{-6}$.
The corrector problems are solved on a reference mesh of 
width $h=\sqrt{2}\times 2^{-9}$.
The localization (or oversampling) parameter 
is  chosen as $\ell=2$.
Figure~\ref{f:coeff1} displays the coefficient $A$.
The four components of the reconstructed coefficient $A_H\loc$
for $H=\sqrt{2}\times 2^{-6}$
are displayed in Figure~\ref{f:reconstr_coeff1}.
Figure~\ref{f:convergence} compares the $L^2$ errors of
the standard FEM,
the FEM with the local effective coefficient,
the method with the quasi-local effective coefficient,
and the $L^2$-best approximation in dependence of $H$.
For comparison, also the error of the Multiscale Finite Element
Method (MSFEM) from \cite{EfendievHou2009} is displayed.
As expected, the error of the FEM is of order $\mathcal O(1)$
because the coefficient is not resolved by the mesh-size $H$.
The error for the quasi-local effective coefficient
is close to the best-approximation. 
The local effective coefficient leads to comparable errors
on coarse meshes. On the finest mesh, where the coefficient is
almost resolved, the error deteriorates. This effect,
referred to as ``resonance effect'', will be studied in the 
second numerical experiment.
Table~\ref{t:bounds} lists the values of the
estimator
$\eta(A_H\loc)$
as well as the bounds $\alpha_H$ and $\beta_H$ on
$(A_H\loc)$.
The estimator $\eta(A_H\loc)$ is small on the first meshes,
which corresponds to an effective coefficient close
to a constant. The estimator increases for the meshes
approaching the resonance regime.
The values of the coefficient $A$ range in the interval
$[\alpha,\beta]=[0.096,1.55]$. In this example, the discrete
bounds $\alpha_H$, $\beta_H$ stay in this interval.

\begin{figure}
\centering
\includegraphics{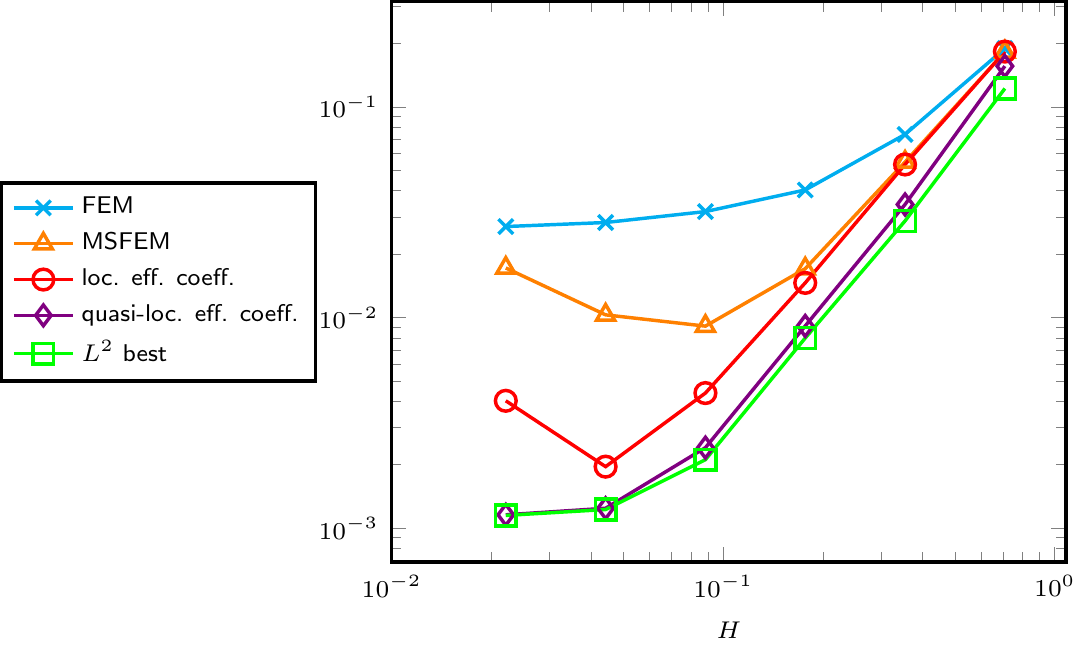}
\caption{Convergence history under uniform mesh refinement.
                 \label{f:convergence}}
\end{figure}

\begin{figure}
\centering
\includegraphics[height=.33\textheight]{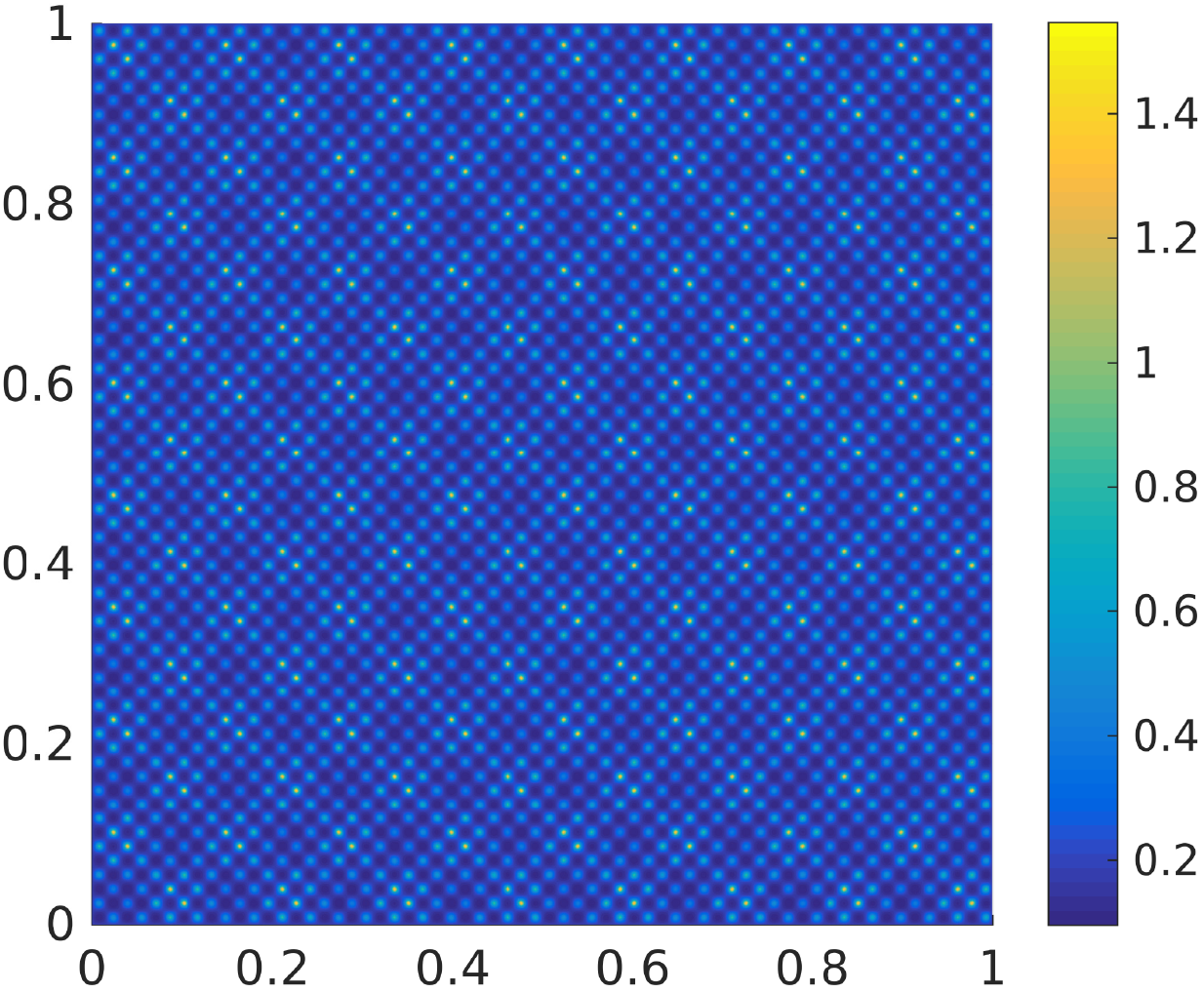}
\caption{The scalar coefficient $A$ for the first experiment.%
         \label{f:coeff1}}
\end{figure}

\begin{figure}
\centering
\includegraphics[height=.33\textheight]{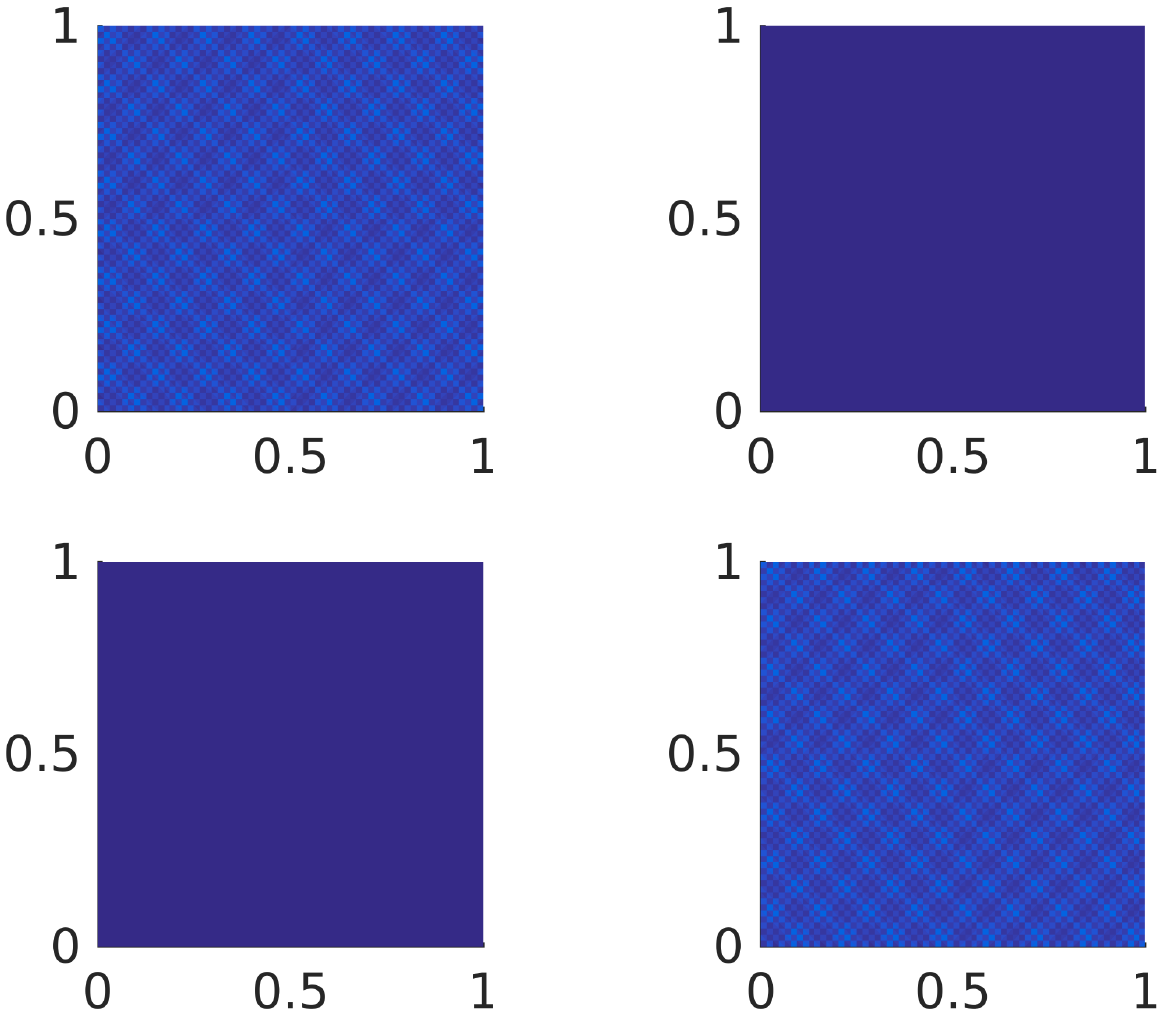}
\hspace{1cm}
\includegraphics[height=.33\textheight]{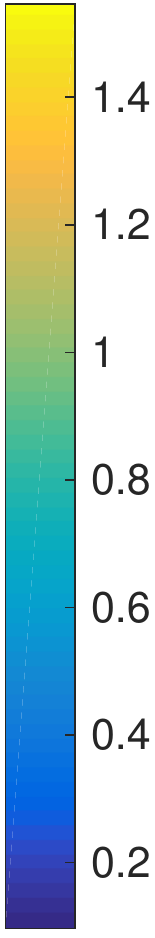}
\caption{Matrix entries of the  reconstructed
           localized coefficient $(A_H\loc)$ in the first experiment
            for $H=\sqrt{2}\times 2^{-6}$.\label{f:reconstr_coeff1}}
\end{figure}

\begin{table}
\centering
\begin{tabular}{l||l|l|l}
$H$    &  $\eta(A_H\loc)$    &  $\alpha_H$    &   $\beta_H$ \\
\hline
$\sqrt{2}\times2^{-1}$  & 3.2108e-02  &  1.9223e-01 &  2.0786e-01 \\
$\sqrt{2}\times2^{-2}$  & 1.1267e-02  &  1.9568e-01 &  1.9954e-01 \\
$\sqrt{2}\times2^{-3}$  & 1.4765e-02  &  1.9579e-01 &  1.9986e-01 \\
$\sqrt{2}\times2^{-4}$  & 5.3952e-01  &  1.8323e-01 &  2.1992e-01 \\
$\sqrt{2}\times2^{-5}$  & 1.7199e+00  &  1.6909e-01 &  2.3257e-01 \\
$\sqrt{2}\times2^{-6}$  & 1.5538e+01  &  1.4070e-01 &  3.0277e-01 \\
\end{tabular}
\caption{%
Values of the estimator
$\eta(A_H\loc)$
and the bounds $\alpha_H$ and $\beta_H$ on
$A_H$ for the first experiment.
The values of the coefficient $A$ range in the interval
$[\alpha,\beta]=[0.096,1.55]$.\label{t:bounds}}
\end{table}

\subsection{Second experiment: Resonance effects}

In this experiment we investigate so-called resonance effects
of our homogenization procedure.
These effects occur because, unlike in Section~\ref{s:periodic},
in the present case we deal with Dirichlet boundary conditions as
well as meshes that do not satisfy requirements in the spirit of
Example~\ref{ex:periodic}.
We consider a fixed mesh of width
$H=\sqrt{2}\times2^{-4}$
and the scalar coefficient
$$
A
(x_1,x_2)=
\left(
5
+
4\,\sin\left(\frac{2\pi}{\varepsilon} x_1\right)
        \sin\left(\frac{2\pi}{\varepsilon} x_2\right)
\right)^{-1}
$$
for a sequence of parameters
$\varepsilon =2^0,2^{-1},\dots,2^{-6}$.
The coefficient $(A_H\loc)$ was computed with the same reference
mesh and the same oversampling parameter as in the first
experiment.
Figure~\ref{f:resonance} displays the $L^2$ errors
normalized by the $L^2$ error of the $L^2$-best approximation.
On the third mesh, where $H$ and $\varepsilon$ have the same order
of magnitude, the local effective coefficient leads
to a larger error compared to the coarser meshes
(where the coefficient is resolved by $H$)
and finer meshes,
where $H$ is much coarser than $\varepsilon$ and the
effective coefficient is close to a constant.
We observe that the values of the estimator $\eta(A_H\loc)$
are large in the resonance regime where also the error of
the method the local effective coefficient is large.
For smaller values of $\varepsilon$, the values of 
$\eta(A_H\loc)$ are close to zero, which indicates that 
the homogenization criterion from Remark~\ref{r:homcrit} is satisfied,
cf.\ also Remark~\ref{r:eta}.

\begin{figure}
\centering
\includegraphics{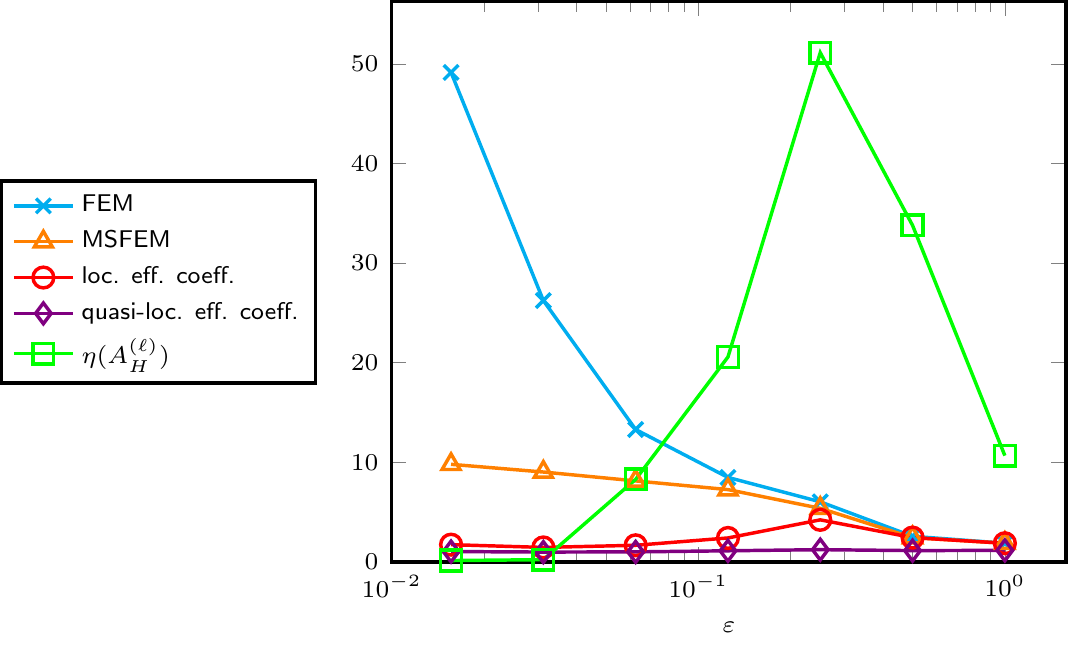}
\caption{Resonance effect:
Normalized (by $L^2$-best error) errors of FEM,
local effective model and quasi-local effecitve model;
and values of the estimator $\eta(A_H\loc)$.\label{f:resonance}}
\end{figure}

{\footnotesize
\bibliographystyle{alpha}
\bibliography{bib_homcoeff_mms}
}

\end{document}